\newtheorem{theorem}{Theorem}[section]
\newtheorem{lemma}[theorem]{Lemma}
\newtheorem{proposition}[theorem]{Proposition}
\newtheorem{definition}[theorem]{Definition}
\theoremstyle{remark}
\numberwithin{equation}{section}
\def\fB{\mathfrak{B}}
\def\fBb{\underline{\fB}}
\def\T{\mathcal{T}}
\def\bb{{\mathbf{b}}}
\def\Er{\mbox{Er}}
\def\sn{{\slashed{\nabla}}}
\def\zb{{\underline{\zeta}}}
\def\J{{\mathcal{J}}}
\def\M{{\mathcal{M}}}
\def\bT{{\textbf{T}}}
\def\bR{{\textbf{R}}}
\def\bd{{\textbf{D}}}
\def\ti{\tilde}
\def\bg{\mathbf{g}}
\def\hk{{\hat{k}}}
\def\I{{\mathcal I}}
\def\beaa{\begin{eqnarray*}}
\def\eeaa{\end{eqnarray*}}
\def\ba{\begin{array}}
\def\ea{\end{array}}
\def\d{\delta}
\def\be#1{\begin{equation} \label{#1}}
\def \eeq{\end{equation}}
\newcommand{\nn}{\nonumber}
\def\l{\langle}
\def\r{\rangle}
\def\nn{\nonumber}
\def\S{{\mathcal S}}
\def\S2{{\Bbb S}^2}
\def\Lb{\underline{L}}
\def\tr{\mbox{tr}}
\def\H{{\mathcal H}}
\def\N{{\mathcal N}}
\def\c{\cdot}
\def\a{\alpha}
\def\b{\beta}
\def\l{\langle}
\def\r{\rangle}
\def\ga{\gamma}
\def\Ga{\Gamma}
\def\p{\partial}
\def\nab{\nabla}
\def\Lb{{\underline{L}}}
\def\aaa{{\mathbf a}}
\def\tr{\mbox{tr}}
\def\Tr{\mbox{Tr}}
\def\tir{{\tilde r}}
\def\f14{\frac{1}{4}}
\def\f12{{\frac{1}{2}}}
\def\t1a{t^{-\frac{1}{a}}}
\def\bm{{\bf m}}
\def\sl{\slashed}
\def\sD{\slashed{\Delta}}
\def\sn{{\slashed{\nabla}}}
\def\zb{{\underline{\zeta}}}
\def\J{{\mathcal{J}}}
\def\M{{\mathcal{M}}}
\def\bT{{\textbf{T}}}
\def\bR{{\textbf{R}}}
\def\bd{{\textbf{D}}}
\def\ti{\tilde}
\def\hk{{\hat{k}}}
\def\I{{\mathcal I}}
\def\beaa{\begin{eqnarray*}}
\def\eeaa{\end{eqnarray*}}
\def\ba{\begin{array}}
\def\ea{\end{array}}
\def\be#1{\begin{equation} \label{#1}}
\def \eeq{\end{equation}}
\def\nn{\nonumber}
\def\l{\langle}
\def\r{\rangle}
\def\nn{\nonumber}
\def\S{{\mathcal S}}
\def\S2{{\Bbb S}^2}
\def\gb{\underline{g}}
\def\Lb{\underline{L}}
\def\tr{\mbox{tr}}
\def\H{{\mathcal H}}
\def\N{{\mathcal N}}
\def\c{\cdot}
\def\a{\alpha}
\def\b{\beta}
\def\l{\langle}
\def\r{\rangle}
\def\ga{\gamma}
\def\Ga{\Gamma}
\def\p{\partial}
\def\nab{\nabla}
\def\nabb{\underline{\nabla}}
\def\Lb{{\underline{L}}}
\def\aaa{{\mathbf a}}
\def\tr{\mbox{tr}}
\def\Tr{\mbox{Tr}}
\def\Nb{{\underline{\N}}}
\def\bN{{\mathbf{N}}}
\def\tir{{\tilde r}}
\def\f14{\frac{1}{4}}
\def\f12{{\frac{1}{2}}}
\def\t1a{t^{-\frac{1}{a}}}
\def\bm{{\bf m}}
\def\sl{\slashed}
\def\sD{\slashed{\Delta}}
\def\ckk{\check}
\newcommand{\bea}{\begin{eqnarray}}
\newcommand{\eea}{\end{eqnarray}}
\def\nn{\nonumber}
\def\be{{(e)}}
\def \up#1{{}^{(#1)}\!}
\newcommand{\chib}{\underline{\chi}}
\begin{document}
\title[]
{A geometric perspective on the method of descent}
\author{Qian Wang}
\address{
Oxford PDE center, Mathematical Institute, University of Oxford, Oxford, OX2 6GG, UK}
  \email{qian.wang@maths.ox.ac.uk}
  \date{\today}
\maketitle
\begin{abstract}
We derive a representation formula for the tensorial wave equation $\Box_\bg \phi^I=F^I$ in globally hyperbolic
Lorentzian spacetimes $(\M^{2+1}, \bg)$ by giving a geometric formulation of the method of descent which is
applicable for any dimension.
\end{abstract}

\section{\bf Introduction}

We consider the wave equation on $(d+1)$ globally hyperbolic, smooth Lorentzian spacetimes $(\M,\bg)$.
In the case that $(\M,\bg)$ is the Minkowski spacetime $({\mathbb R}^{d+1},\bm)$, a representation formula
for the solutions of wave equation can be obtained by the classical theory (\cite{Evans}). In fact, when $d$
is odd, one can derive the formula by reducing the problem to a wave equation in  $({\mathbb R}^{1+1},\bm)$
via the spherical mean and then apply the d'Alembert's formula; when $d$ is even, one can derive the formula
by considering a wave equation  in $({\mathbb R}^{(d+1)+1}, \bm)$ and then apply the method of descent due to
Hadamard. In particular, for $d=3$ and $d=2$, the corresponding formulae are called the Kirchhoff
formula and the Poisson formula respectively.

When $(\M,\bg)$ is a globally hyperbolic $(3+1)$ Lorentzian spacetime, a geometric Kirchhoff formula is provided
in \cite{KSob} for the tensorial wave equations $\Box_{\bg} \phi^I=F^I$. In this paper, we give the geometric
formulation of the method of descent in $(2+1)$ Lorentzian spacetimes. By using this formulation, we obtain a
first order, intrinsic,  representation formula in physical space for the solutions of  tensorial wave equations in  $(2+1)$ spacetimes.
Our construction is purely geometric, which potentially can be used in quasi-linear problems, such as $(2+1)$ gravity,
when the geometric quantities appeared in the formula have better structures due to the curvature properties of
the background geometry.

There are various types of  parametrix for wave equations in the curved spacetime. When establishing the Strichartz
estimates or the bilinear estimates for solving well-posedness problem with large rough data, one may use the Fourier
type parametrix, see \cite{KRpara} and \cite{Sm} for examples.  The Kirchhorff parametrix constructed in \cite{KSob}
is used in \cite{KR2} and \cite{Wang10} to provide  geometric breakdown criteria for the solutions of $(3+1)$ Einstein
vacuum equations with large data.  The application can be traced back to the work of \cite{EM2}, where the authors
prove the global existence result for Yang-Mills-Higgs equations. A crucial step of the proof is to use the representation
formula for the wave equation in Minkowski space to represent and control the curvature. This strategy is later used
in \cite{chsh} to prove the same result in globally hyperbolic spacetimes, where they employed the Hadamard parametrix
in \cite{Fried} in the curved spacetimes. This parametrix at a point $p$ is in physical space, supported within the
domain of dependence, nevertheless it is not purely supported on the boundary of the casual past of the point,
in $(3+1)$ spacetime, due to a series of corrections by using transport equations. Moreover, this parametrix is ill-suited
for nonlinear problems because it requires the property of  geodesic convexity and infinite smoothness on the metric
for controlling the correction terms.  The Kirchhorff formula established in \cite{KSob} in particular is supported
only on the null boundary of the causal past, i.e. the backward lightcone of the point $p$, which coincides with the
Huygens principle. In fact, this formula is an integral purely along the null boundary. The null boundary and the
quantities on the background geometry involved in the formula have much better regularity property, which can be
controlled in terms of the Bel-Robinson energy flux by using a series of sharp trace estimates in Einstein spacetime.
This advantage is  very crucial for the applications in \cite{KR2} and \cite{Wang10}.

In a $(2+1)$ curved spacetime,  there is no such formula available. To implement the method of descent based on
the Kirchhorff formula in a $(3+1)$ Lorentzian spacetime, we need to establish the geometric correspondence between
the geometry of the  lightcone of a point  in the $(3+1)$ spacetime with the causal past of the same point in the $(2+1)$ spacetime.

With $\rho$  the Lorentzian distance inside the backward lightcone in $(d+1)$ spacetime, we observe that the vector
field  $-\bd \rho$ in $\T \M$ corresponds to the null geodesic generator $\ti L$ in the corresponding backward light
cone  in $((d+1)+1)$ spacetime. This allows  us  to express the geometric quantities and the null frames on the light
cone in $((d+1)+1)$ spacetime in terms of the hyperboloidal frames  in $(d+1)$ spacetime. These quantities include the
null expansion of the light rays in $((d+1)+1)$ spacetime and the area expansion of the timelike geodesic congruence
in $(d+1)$ and other connection coefficients of these frames. Based on this observation, we can uncover the relation
between  the radius of injectivity of the corresponding geodesic congruences in two spacetimes of different dimensions.
This relation is in particular important since the Kirchhorff formula in \cite{KSob} holds within the null radius of injectivity.

In this paper, we focus on the case that $d=2$, while our method applies to higher dimensions.  As long as the analogous
representation formulae of \cite{KSob} in other odd dimensions are available, we can similarly obtain the formulae in
the even dimensions. This paper is organized as follows. In Section \ref{sec2.1}, we give the geometric set-up and the
main theorem of the paper. In Section \ref{sec2.2}, we give the relation between the geometry of  the null cones with
the vertex $p$ in $(3+1)$ spacetime and the causal past of the same vertex
in $(2+1)$ spacetime.  By  uncovering the  quantitative correspondence  between connection coefficients of the null
frames in $(3+1)$ spacetime and those of the triads in $(2+1)$ spacetime, we control the null radius of injectivity
in terms of the causal radius of injectivity in $(2+1)$ spacetime. In Section 2.3 we then complete the proof of the
main result. This result, in the flat case, coincides with the
Poisson formula in  Minkowski space. In Appendix, we give a proof of the Kirchhoff formula used in Section 2.

\section{\bf A geometric method of descent}

Let $(\M,\bg)$ be a $(2+1)$-globally hyperbolic smooth Lorentzian spacetime. We assume that $\M$ is
foliated by a time function $t$ and the metric $\bg$ takes the form\begin{footnote}{Throughout the paper
we use the Einstein summation convention. We set $x^0=t$. A little Greek letter is used to denote an index from $\{0,1,2\}$ and
a little Latin letter is used to denote an index from $\{1, 2\}$, e.g. $\a=0, 1, 2$ and $i = 1, 2$.}
\end{footnote}
\begin{equation*}
\bg = \bg_{\a\b} dx^\a dx^\b = -n^2 dt^2+g_{ij}dx^i dx^j,
\end{equation*}
where $n$ is the lapse function and $g = g_{ij} dx^i dx^j$ are Riemannian metrics on $\Sigma_t$, the level sets of
the time function $t$.

Let $\bd$ denote the covariant differentiation on $(\M, \bg)$ and let $\Box_\bg:=\bg^{\a\b}\bd_\a\bd_\b$
denote the induced d'Alembertian. Consider the tensorial wave equation $\Box_\bg \phi_I = F_I$ in $(\M, \bg)$.
In this paper we will develop a
geometric formulation of the method of descent to derive a representation formula for $\phi_I$ which can be viewed
as an extension of the classical Poisson formula for the scalar wave equation in the Minkowski spacetime
$({\mathbb R}^{2+1}, \bm)$.

\subsection{\bf Set-up and main result}\label{sec2.1}

Let $\bT$ be the future directed time-like unit normal of  $\Sigma_t$. 
Any point in $(\M,\bg)$ can be written as $(t,x)$ where $x\in \Sigma_t$. Given $p\in \M$, 
we denote by $\I^-(p)$, $\J^-(p)$ and $\N^-(p)$ the chronological past, the causal past and the backward
light-cone in $(\M,\bg)$ initiating from $p$. Note that $\N^-(p)$ is a surface ruled by the backward null
geodesics from $p$. In the sequel, by $\Sigma_t$ we mean $\Sigma_t\cap \I^-(p)$.

For a fixed point $p\in \M$, we consider 
\begin{equation*}
\mathbb{H}^2:= \left\{V\in \T_p\M: \| V\|_{\bg(p)}=-1, V^0= \bg( V, \bT)>0\right\}.
\end{equation*}
Relative to a geodesic normal coordinate at $p$, we can regard
$$
\mathbb{H}^2= \left\{ V=(V^0, V^1, V^2): (V^0)^2 - \sum_{i=1}^2 (V^i)^2 =1 \mbox{ and } V^0>0\right\}
$$
which is the canonical hyperboloid in ${\mathbb R}^{2+1}$. For each $V\in \mathbb{H}^2$ let $\Upsilon_V(\rho)$
be the time-like geodesic with $\Upsilon_V(0)=p$ and $\Upsilon_V'(0) = V$, and let $\rho(t)$ denote the
Lorentzian distance from $p$ to the intersection point of $\Upsilon_V(\rho)$ with $\Sigma_t$. Note that
$\rho(t)$ is a function not only depending on $t$ but also on $V$; we suppress $V$ for simplicity.
We then define the past time-like radius of injectivity $\d_*$ at $p$ in $(\M, \bg)$ to be the supremum
over all the values $\tau>0$ for which the exponential map
\begin{equation}\label{12.29.4}
\exp_p: (t, V)\to \Upsilon_V(\rho(t))
\end{equation}
is a global diffeomorphism from $(t_p-\tau, t_p)\times \mathbb{H}^2$ to its image in $\I^-(p)$.
In this paper, we only consider the part of $\I^-(p)$ within the time-like
radius of injectivity, which will be still denoted as $\I^-(p)$ by abuse of notation.

For $(t,x)\in \J^-(p)$ let $\rho(t,x)$ be the Lorentzian distance to $p$ in $\J^-(p)$.  Clearly,
$\rho(t,x)=0$ iff $(t,x) \in \N^-(p)$. Moreover, within $\J^-(p)$ with $0<t_p-t<\d_*$ this function is smooth
and verifies
\begin{equation}\label{2.13.6}
\bg^{\a\b}\p_\a \rho \p_\b \rho=-1,\qquad \rho(p)=0.
\end{equation}
In $\I^-(p)\subset\M$ we define the vector field $\fB$ by
$$
\fB: =-\bd \rho  =-\bg^{\a\b} \p_\a \rho \p_\b.
$$
Then $\fB$ is geodesic, i.e. $\bd_\fB \fB=0$ and satisfies $\bg( \fB, \fB)=-1$. Moreover,
\begin{equation}\label{8.6.1}
\fB = \left(d \exp_p\right)_{\rho V} (\p_\rho).
\end{equation}
Let $H_\rho$ denote the level sets of $\rho$.
Then $\fB$ is the past directed unit normal of $H_\rho$ and is the generator of the timelike geodesic $\Upsilon_V(\rho)$.

We define the frame lapse $\bb$ by
\begin{equation}\label{eq_1}
\bg( \fB, \bT)=\bb^{-1}\frac{t_p-t}{\rho}.
\end{equation}
Let $\tau := t_p-t$. Then by noting that $\bT = n^{-1} \p_t$, we have from (\ref{eq_1}) that
\begin{equation}\label{eq_2}
\fB(\tau)=n^{-1} \bb^{-1}\frac{\tau}{\rho}.
\end{equation}

Let $g$ be the induced metric on $\Sigma_t$ and let $\nab$ be the corresponding covariant derivative on $\Sigma_t$.
We consider the lapse function $a^{-1}:=|\nabla \rho|_g$. By using (\ref{eq_1}) we have
\begin{equation}\label{2.13.5}
-\bT(\rho)=\frac{\bb^{-1}\tau}{\rho}.
\end{equation}
This together with (\ref{2.13.6}) then implies that
\begin{align*}
-1=\bg^{\a\b}\p_\a \rho\p_\b\rho&=-(\bT(\rho))^2+g^{ij} \p_i \rho\p_j \rho
= -\frac{\bb^{-2}\tau^2}{\rho^2}+ |\nabla \rho|_g^2.
\end{align*}
Hence the lapse $a$ can be written as
\begin{equation*}
a^{-2} = |\nabla \rho|_g^2 = \frac{\bb^{-2}\tau^2}{\rho^2}-1,
\end{equation*}
which also implies $\bb^{-1}\tau\ge \rho$ in $\I^-(p)$. By setting $\ti r=\sqrt{\bb^{-2} \tau^2-\rho^2}$, we have
\begin{equation}\label{aa1}
a^{-1}=\frac{\tir}{\rho}.
\end{equation}
Let $S_{t,\rho} := H_\rho\cap \Sigma_t$. Then for each fixed $t$, $\{S_{t,\rho}\}_\rho$ is a family of 1-dimensional curves
diffeomorphic to circles and forms the radial foliation of $\Sigma_t$. Let $\bN$ be the radial normal of $S_{t,\rho}$
in $\Sigma_t$. Then
\begin{equation}\label{2.13.4.1}
\bN = -\frac{\nabla \rho}{|\nabla \rho|_g} = -a\nab \rho.
\end{equation}
In view of (\ref{2.13.5}) and (\ref{2.13.4.1}), we can decompose $\fB$ in terms of $\bT$ and $\bN$ as
\begin{equation}\label{fb1}
\fB=-\frac{\bb^{-1}\tau}{\rho}\bT+a^{-1} \bN.
\end{equation}
We set
\begin{equation}\label{fb2}
 \underline{\fB}=-\frac{\bb^{-1}\tau}{\rho}\bT-a^{-1}\bN.
 \end{equation}
 Clearly
\begin{equation}\label{2.12.1}
\bg(\fB, \fB)= \bg(\underline{\fB}, \underline{\fB})=-1.
\end{equation}

Let $\gb$ be the induced metric of $\bg$ on $H_\rho$ and $\nabb$ be the Levi-civita connection of $\gb$.
By introducing the projection tensor
$$
\ckk{\Pi}_{\a\b}=\bg_{\a\b}+\fB_\a\fB_\b,
$$
we have
\begin{equation*}
\nabb^\a=\ckk{\Pi}_{\b\ga}\bg^{\a\b}\bd^\ga \quad \mbox{ and } \quad
|\nabb \tau|_{\gb}=(an)^{-1}.
\end{equation*}
Let $\Nb$ be the  radial normal of  $\{S_{\tau,\rho}\}_\tau\subset H_\rho$. Then we have
$$
\Nb= \frac{\nabb \tau}{|\nabb \tau|_{\gb}}=an \nabb \tau.
$$
Similar to \cite[Page 13]{Wang16}, $\Nb$ can be decomposed as
\begin{equation}\label{7.12.12.16}
\Nb=-\frac{\tir}{\rho}\bT+\frac{\bb^{-1}\tau}{\rho}\bN.
\end{equation}

We will use $\sn$ to denote the Levi-civita connection of the induced metric on $S_{\tau, \rho}$
and use $e_\sl{A}$ to denote a unit tangent vector field on $S_{\tau, \rho}$.


\begin{definition}\label{7.21.1.0}

\begin{enumerate}[leftmargin = 0.8cm]
\item
We denote by $\pmb{\pi}$ the second fundamental form of $(\Sigma_t,g)\subset(\M, \bg)$, i.e.
\begin{equation*}
\pmb{\pi}(X, Y) = -\bg({\bf D}_X {\bf T}, Y)
\end{equation*}
for $X, Y\in \T \Sigma_t$. The trace of $\pmb{\pi}$ is $\emph{\Tr} \pmb{\pi}=g^{ij} \pmb{\pi}_{ij}$.

\item We denote by $k$ the second fundamental form of $H_\rho\subset (\M, \bg)$, i.e.
\begin{equation}\label{k1}
k(X,Y)=\bg({\bf D}_X\fB, Y)
\end{equation}
for $X, Y\in \T H_{\rho}$ in $(\M, \bg)$. We denote the trace and traceless part of $k$ by $\emph{\tr} k$ and $\hk$ respectively.
Note that in Minkowski space $({\mathbb R}^{2+1}, \bm)$ we have $\emph{\tr} k=\frac{2}{\rho}$.

\item We introduce the connection coefficients
$$
\omega := - \fB\left(\frac{\rho}{\bb^{-1} \tau}\right) \quad \mbox{ and } \quad
\zb_\sl{A}: = \l {\bf D}_\fB \Nb, e_\sl{A}\r.
$$
\end{enumerate}
\end{definition}

We first give some preliminary results on the geometric quantities defined above.

\begin{lemma}
For the frame lapse $\bb$ and $a$ and the connection coefficients $\omega$ and $\zb$, there hold
\begin{align}
\zb_\sl{A}& = \frac{\rho}{\tir} \l {\bf D}_\fB {\bf T}, e_\sl{A}\r,  \label{7.10.16.3}\\
\omega &= \frac{\rho \tir}{\bb^{-2}\tau^2} \l {\bf D}_\fB {\bf T}, {\bf N}\r, \label{7.15.6}\\
\fB(\bb^{-1}) & = \frac{\bb^{-1}}{\rho}(1-\bb^{-1}n^{-1}) + \frac{\bb^{-2}\tau}{\rho}\omega, \label{7.12.8.16}\\
\sn_\sl{A}\log a&=\frac{\bb^{-1}\tau}{\tir}(\pmb{\pi}_{\bN \sl{A}}-k_{\sl{A}\Nb}), \label{8.3.3}\\
\l {\bf D}_{\bf T} \bN, e_\sl{A}\r&=k_{\Nb \sl{A}}+\frac{\bb^{-1}\tau}{\tir}\l {\bf D}_{\bf T} {\bf T}, e_\sl{A}\r. \label{8.3.5}
\end{align}
\end{lemma}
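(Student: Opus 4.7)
The plan is to prove each identity by direct computation, leveraging a common toolkit: the decomposition (\ref{fb1}) of $\fB$ and (\ref{7.12.12.16}) of $\Nb$, together with the inverse relations $\bT = -\frac{\bb^{-1}\tau}{\rho}\fB + \frac{\tir}{\rho}\Nb$ and $\bN = a\fB + \frac{\bb^{-1}\tau}{\tir}\bT$ (obtained by linear algebra using $\bb^{-2}\tau^2 - \tir^2 = \rho^2$); the geodesic identity $\bd_\fB\fB = 0$ and the unit-norm identity $\bg(\bd_X\fB, \fB) = 0$; the scalar identities $\fB(\rho) = 1$ (from $\fB = -\bd\rho$ together with (\ref{2.13.6})) and $\fB(\tau) = n^{-1}\bb^{-1}\tau/\rho$ from (\ref{eq_2}); the relation $a^{-1} = \tir/\rho$; and the orthogonality $\l \bT, e_\sl{A}\r = \l \bN, e_\sl{A}\r = 0$ together with the symmetry of $\pmb{\pi}$.

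For (\ref{7.10.16.3}), I substitute (\ref{7.12.12.16}) into $\zb_\sl{A} = \l \bd_\fB\Nb, e_\sl{A}\r$; the Leibniz terms on the scalar coefficients vanish upon pairing with $e_\sl{A}$, leaving a combination of $\l \bd_\fB\bT, e_\sl{A}\r$ and $\l \bd_\fB\bN, e_\sl{A}\r$. Applying the same orthogonality to $\bd_\fB\fB = 0$ after inserting (\ref{fb1}) gives the constraint $a^{-1}\l \bd_\fB\bN, e_\sl{A}\r = (\bb^{-1}\tau/\rho)\l \bd_\fB\bT, e_\sl{A}\r$; substituting this back and collapsing the combined coefficient via $\bb^{-2}\tau^2 - \tir^2 = \rho^2$ yields the factor $\rho/\tir$. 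For (\ref{7.15.6}), I rewrite $\omega = -\fB(\rho/(\bb^{-1}\tau)) = \fB(\bg(\fB,\bT))/\bg(\fB,\bT)^2$ via (\ref{eq_1}); the geodesic identity turns the numerator into $\bg(\fB, \bd_\fB\bT)$, and (\ref{fb1}) kills its $\bT$ component through $\bg(\bT, \bd_\fB\bT) = 0$. For (\ref{7.12.8.16}), I expand $\omega = -\fB(\rho/(\bb^{-1}\tau))$ by the Leibniz rule, insert $\fB(\rho) = 1$ and $\fB(\tau)$, and solve linearly for $\fB(\bb^{-1})$.

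For (\ref{8.3.3}), I use $\log a = \log\rho - \f12\log(\bb^{-2}\tau^2 - \rho^2)$, and since $e_\sl{A}$ annihilates both $\rho$ and $\tau$ (both constant on $S_{\tau,\rho}$), $\sn_\sl{A}\log a$ collapses to $-(\bb^{-1}\tau^2/\tir^2)\, e_\sl{A}(\bb^{-1})$. To compute $e_\sl{A}(\bb^{-1})$ I differentiate (\ref{eq_1}) along $e_\sl{A}$: the inverse decomposition of $\bT$ combined with $\bg(\bd_{e_\sl{A}}\fB, \fB) = 0$ converts $\l \bd_{e_\sl{A}}\fB, \bT\r$ into $(\tir/\rho)k_{\sl{A}\Nb}$, while (\ref{fb1}) together with the symmetry of $\pmb{\pi}$ converts $\l \fB, \bd_{e_\sl{A}}\bT\r$ into $-a^{-1}\pmb{\pi}_{\bN\sl{A}}$; assembling the pieces yields the identity. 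For (\ref{8.3.5}), I write $\bN = a\fB + (\bb^{-1}\tau/\tir)\bT$ by inverting (\ref{fb1}), differentiate along $\bT$, and pair with $e_\sl{A}$: scalar-derivative terms and the $\fB$-, $\bT$-projections die, leaving $a\l \bd_\bT\fB, e_\sl{A}\r + (\bb^{-1}\tau/\tir)\l \bd_\bT\bT, e_\sl{A}\r$. Decomposing $\bT$ again inside the first term and using $\bd_\fB\fB = 0$ turns it into $a(\tir/\rho)k_{\Nb\sl{A}} = k_{\Nb\sl{A}}$.

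The main obstacle is the bookkeeping in (\ref{8.3.3}): one must differentiate the defining relation $\bg(\fB,\bT) = \bb^{-1}\tau/\rho$ along a surface-tangent direction and correctly sort the two resulting normal-projection terms into the second fundamental forms $k$ of $H_\rho$ and $\pmb{\pi}$ of $\Sigma_t$, and the symmetry of $\pmb{\pi}$ (inherited from $\bT$ being normal to the $t$-level sets) is essential for the indices to match. The other identities are direct algebraic consequences of the same two decompositions together with the geodesic and eikonal equations.
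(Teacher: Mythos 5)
Your proposal is correct and follows essentially the same route as the paper: each of the five identities is obtained by direct computation from the decompositions (\ref{fb1}) and (\ref{7.12.12.16}) and their inverses, together with $\bd_\fB\fB=0$, the eikonal equation (\ref{2.13.6}), and the orthogonality of $\bT$, $\bN$ to $e_{\sl{A}}$. The only deviations are cosmetic reorganizations of the same algebra (e.g.\ you obtain (\ref{7.12.8.16}) by expanding the definition of $\omega$ and solving for $\fB(\bb^{-1})$, and (\ref{8.3.3}) by differentiating $\log a$ rather than $\log\tir$), which produce the same intermediate identities as the paper's proof.
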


\begin{proof}
We first derive (\ref{7.10.16.3}). From (\ref{fb1}) and (\ref{7.12.12.16}) it follows that
\begin{equation}\label{t1}
\bT=-\frac{\bb^{-1}\tau}{\rho} \fB+\frac{\tir}{\rho}\Nb
\end{equation}
and hence $\Nb=\frac{\rho}{\tir}(\bT+\frac{\bb^{-1}\tau}{\rho}\fB)$. Consequently, by using $\bd_\fB \fB=0$ we have
\begin{equation}
\zb_\sl{A} = \l \bd_\fB \Nb, e_\sl{A}\r = \frac{\rho}{\tir} \l \bd_\fB \bT, e_\sl{A}\r + \frac{\bb^{-1}\tau}{\tir} \l \bd_\fB\fB, e_\sl{A}\r
 = \frac{\rho}{\tir}\l \bd_\fB \bT, e_\sl{A}\r.
\end{equation}

To see (\ref{7.15.6}), we use (\ref{eq_1}), (\ref{fb1}) and  $\bd_\fB \fB=0$ to obtain
\begin{align*}
\omega & = \frac{\rho^2}{\bb^{-2}\tau^2} \fB\left(\frac{\bb^{-1} \tau}{\rho}\right)
= \frac{\rho^2}{\bb^{-2}\tau^2} \fB (\l\fB, \bT\r) = \frac{\rho^2}{\bb^{-2}\tau^2} \l \fB, \bd_\fB \bT\r\\
& = \frac{\rho^2}{\bb^{-2}\tau^2} a^{-1} \bg(\bd_\fB \bT, \bN) = \frac{\rho \tir}{\bb^{-2}\tau^2} \bg(\bd_\fB \bT, \bN).
\end{align*}

To obtain (\ref{7.12.8.16}), we use $\bb^{-1}=\frac{\rho}{\tau}(\fB, \bT)$. By using (\ref{eq_2}) we have
\begin{align*} 
\fB\left(\frac{\rho}{\tau}\right) = \frac{1}{\tau} - \frac{\rho}{\tau^2} \fB(\tau) = \frac{1}{\tau}(1-n^{-1}\bb^{-1}).
\end{align*}
Therefore, in view of (\ref{eq_1}), $\bd_\fB \fB = 0$ and (\ref{fb1}), it follows that
\begin{align*}
\fB(\bb^{-1}) &= \fB\left(\frac{\rho}{\tau}\l\fB, \bT\r\right)
= \fB\left(\frac{\rho}{\tau}\right)\l\fB, \bT\r + \frac{\rho}{\tau} \l \fB,\bd_\fB\bT\r \\
& = \frac{\bb^{-1}}{\rho}(1-\bb^{-1}n^{-1})+ \frac{\tir}{\tau} \l \bN, \bd_\fB \bT\r.
\end{align*}
In view of (\ref{7.15.6}), we therefore obtain (\ref{7.12.8.16}).

To obtain (\ref{8.3.3}), we first use $\tir^2 = \bb^{-2}\tau^2 - \rho^2$ to derive that $\sn_\sl{A} \tir =
\frac{\bb^{-1} \tau}{\tir} \sn_\sl{A} (\bb^{-1} \tau)$. Thus, in view of (\ref{eq_1}) we have
\begin{align*} 
\sn_\sl{A} \log \tir &= \frac{\bb^{-1}\tau}{\tir^2}\sn_\sl{A} (\bb^{-1}\tau)
= \frac{\bb^{-1}\tau \rho}{\tir^2}\sn_\sl{A} \left(\frac{\bb^{-1}\tau}{\rho}\right)
=\frac{\bb^{-1} \tau \rho}{\tir^2} \sn_\sl{A} \l \fB, \bT\r \\
& =\frac{\bb^{-1}\tau\rho}{\tir^2} \left(\l \bd_\sl{A} \fB, \bT\r + \l \fB, \bd_\sl{A} \bT\r\right).
\end{align*}
By using (\ref{t1})  and (\ref{fb1}) we can further obtain
\begin{equation*}
\sn_\sl{A} \log \tir = \frac{\bb^{-1} \tau}{\tir} \left(\l \bd_\sl{A} \fB, \Nb\r + \l \bN, \bd_\sl{A} \bT\r \right)
=\frac{\bb^{-1} \tau}{\tir} (k_{\sl{A}\Nb}-\pmb{\pi}_{\sl{A}\bN}).
\end{equation*}
 (\ref{8.3.3}) then  follows  by using (\ref{aa1}) and the above identity.

Finally, we prove (\ref{8.3.5}). From (\ref{fb1}) we have $\bN = \frac{\rho}{\tir} \fB + \frac{\bb^{-1}\tau}{\tir} \bT$.
Thus, by using $\l \fB, e_\sl{A}\r = \l \bT, e_\sl{A}\r =0$ we have
\begin{align*}
\l \bd_\bT \bN, e_\sl{A}\r =\frac{\rho}{\tir} \l\bd_\bT \fB, e_\sl{A}\r+\frac{\bb^{-1}\tau}{\tir} \l \bd_\bT \bT, e_\sl{A}\r.
\end{align*}
By using (\ref{t1}) and $\bd_\fB \fB=0$ we obtain
\begin{align*}
\l \bd_\bT \bN, e_\sl{A}\r = \l \bd_\Nb \fB, e_\sl{A}\r + \frac{\bb^{-1}\tau}{\tir} \l \bd_\bT \bT, e_\sl{A}\r
= k_{\Nb \sl{A}}+\frac{\bb^{-1}\tau}{\tir}\l \bd_\bT \bT,e_\sl{A}\r.
\end{align*}
The proof is therefore complete.
\end{proof}

Now we are ready to state the main result of this paper.
\begin{theorem}[Main theorem]\label{7.13.2.16}
Consider a tensorial wave equation
\begin{equation}\label{2.14.5}
\Box_\bg \phi_I=F_I
\end{equation}
on $(\M, \bg)$. Let $p$ be any point in $(\M, \bg)$ and $t_0$ verify $0<t_p-t_0<c_*(p,t).$\begin{footnote}{The definition of the causal radius of injectivity  $c_*(p,t)$ is given in Theorem \ref{8.7.2}.}\end{footnote} Denote by $\I_*^-(p)$ the interior of the
backward lightcone from $p$  with $t\in [t_0, t_p]$. Given a tensor $J$ at $p$ of the same type as $\phi_I$,
let $A_I$ be a tensor field on $\I^-_*(p)$ satisfying
\begin{equation}\label{tsa2}
{\bf D}_\fB A_I+ \left(\f12 \emph{\tr} k+\frac{n^{-1}\bb^{-1}-1}{\rho}\right) A_I=0, \quad \lim_{t\rightarrow t_p} \tau A_I=J.
\end{equation}
Then there holds
\begin{equation}\label{7.19.1}
2\pi (n\bg( \phi, J))(p)=-\int_{\I_*^-(p)} F_I A^I \frac{\tau}{\rho}  nd\mu_{\Sigma_t} dt+\I_1 +\I_2+\I_3,
\end{equation}
with
\begin{align*}
\I_1 & =\int_{\Sigma_{{t_0}}\cap {\I_*^-(p)}} \left[-{\bf D}_{\underline{\fB}} \phi_I+\f12\phi_I
\left( \emph{\tr} k-2\frac{\tau}{\bb\rho} (\emph{\Tr} \pmb{\pi}
-\frac{\bb^2\tir^2}{\tau^2} \pmb{\pi}_{\bN\bN})\right)\right]A^I d\mu_{\Sigma_{t_0}}, \displaybreak[0]\\
\I_2 &= -2 \int_{\I_*^-(p)}\left[ \frac{\tir}{\rho} \zb^\sl{A} {\bf D}_\sl{A} \phi_I
+ \frac{\rho}{\tir} \omega {\bf D}_\bN \phi_I\right] A^I \bb n d \mu_{\Sigma_t} d t  \displaybreak[0]\\
& \quad \, - \int_{\I_*^-(p)} \left[\sn(\bb A^I )\sn \phi_I +\frac{\rho^2}{\bb^{-2}\tau^2} {\bf D}_\bN(\bb A^I) {\bf D}_\bN\phi_I\right]
\frac{\bb^{-1}\tau n}{\rho}d\mu_{\Sigma_t}dt, \displaybreak[0]\\
\I_3 & = \int_{\I_*^-(p)} \left[\frac{\tir}{\rho} ({\bf R}*\phi)_I +  \omega \emph{\tr} k \phi_I
+ \f12\frac{\rho}{\bb^{-1} \tau} ({\bf R}_{\fB\fB} + |\hat k|^2) \phi_I\right] A^I \bb n d \mu_{\Sigma_t} dt \displaybreak[0]\\
& \quad \, +\int_{\I_*^-(p)} \left(\fB + \frac{\emph{\tr} k}{2} - \frac{\bb^{-1} \tau}{\rho} \omega\right)
\left(\emph{\Tr} \pmb{\pi} - \frac{\tir^2}{\bb^{-2} \tau^2} \pmb{\pi}_{\bN\bN}\right) \phi_I A^I \bb n d\mu_{\Sigma_t}dt,
\end{align*}
where, for $I = \{\mu_1, \cdots, \mu_l\}$,
\begin{align*}
({\bf R} * \phi)_I = \sum_{i=1}^l \tensor{{\bf R}}{_{\mu_i}^\a _{ {\bf T}\bN}}
\tensor{\phi}{_{\mu_1\cdots\mu_{i-1} \a \mu_{i+1}\cdots \mu_l}}.
\end{align*}
\end{theorem}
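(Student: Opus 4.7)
The plan is to derive (\ref{7.19.1}) by reducing to the (3+1) Kirchhoff formula proved in the Appendix via a geometric lift-and-descend argument. First I would construct a lifted (3+1) spacetime $(\ti\M, \ti\bg) = (\M \times \mathbb{R}, \bg + (dx^3)^2)$, which is again globally hyperbolic. Any tensor field $\phi_I$ on $\M$ extends trivially to a tensor field $\ti\phi_I$ on $\ti\M$ independent of $x^3$, and $\Box_{\ti\bg} \ti\phi_I = \Box_\bg \phi_I = F_I$. Picking the lifted point $\ti p = (p, 0)$, the backward null cone $\ti\N^-(\ti p)$ is a 3-surface ruled by null geodesics from $\ti p$. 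The decisive geometric observation, flagged in the introduction, is that the affine null generator $\ti L$ of $\ti\N^-(\ti p)$, projected onto $\T\M$ and reparametrised by the Lorentzian distance $\rho$ from $p$ inside $(\M,\bg)$, coincides with $\fB = -\bd\rho$. Concretely, a null generator at time $t$ hits the fibres $x^3 = \pm\sqrt{\bb^{-2}\tau^2 - \rho^2}$, so $\ti\N^-(\ti p)$ is a double cover of the solid past $\I_*^-(p)$ under the map $(t, V, x^3) \mapsto (t, \exp_p(\rho V))$. This dictionary also delivers the quantitative correspondence between the past null radius of injectivity of $\ti p$ and the causal radius of injectivity $c_*(p,t)$, justifying the hypothesis $0 < t_p - t_0 < c_*(p,t)$.

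With this dictionary in hand, I would apply the (3+1) Kirchhoff formula to $\ti\phi_I$ with seed $\ti J$ chosen as the $x^3$-translation invariant lift of $J$. The (3+1) formula is a codimension-one integral over $\ti\N^-(\ti p)$ plus a boundary contribution on $\ti\Sigma_{\ti t_0}$, with the weight $\ti A_I$ transported along $\ti L$. The proof then reduces to three coupled conversions. The first is measure and transport: using the Jacobian $\rho\,dx^3 = -(dx^3) \tir$ type relation on each slice $\ti\N^-(\ti p) \cap \ti\Sigma_t$, the double cover collapses a $4\pi$ factor to $2\pi$ and produces the weight $\tau/\rho$ decorating the bulk integral of (\ref{7.19.1}); at the same time, reparametrising $\ti L$ by $\rho$ converts the (3+1) transport equation for $\ti A$ into precisely (\ref{tsa2}), with the correction $(n^{-1}\bb^{-1}-1)/\rho$ exactly accounting for the discrepancy between the null affine parameter and $\rho$ computed by (\ref{7.12.8.16}).

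The second conversion is the frame translation. The (3+1) null frame $(\ti L, \ti\Lb, \ti e_\sl A)$ along $\ti\N^-(\ti p)$ must be rewritten in terms of the hyperboloidal frame $(\fB, \Nb, e_\sl A)$ and the transverse direction $\p_{x^3}$; the preceding lemma, and in particular (\ref{7.10.16.3})--(\ref{8.3.5}), is tailored so that $\ti{\tr\chi}$, $\ti{\chih}$, the torsion $\ti\zeta$ and the conjugate connection coefficients translate into the combinations $\tr k$, $\hk$, $\zb$, $\omega$ and $\Tr\pmb\pi - \frac{\tir^2}{\bb^{-2}\tau^2}\pmb\pi_{\bN\bN}$ that populate $\I_1,\I_2,\I_3$. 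The third is curvature collapse: since $\ti R_{3\,\a\,3\,\b} = 0$ and $\ti R_{\a\b\ga\d} = R_{\a\b\ga\d}$ for $\M$-indices, the ambient curvature contraction $\ti{\bf R}*\ti\phi$ reduces to $({\bf R}*\phi)_I$, while the Gauss and transversal contributions generated by splitting $\ti L$ between $\fB$ and $\p_{x^3}$ produce the terms $\omega \tr k$ and $\frac{\rho}{\bb^{-1}\tau}({\bf R}_{\fB\fB} + |\hk|^2)$ in $\I_3$.

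The main technical obstacle is bookkeeping. Every ratio $\rho/\tir$, $\bb^{-1}\tau/\rho$, $a$, together with each cross term between the $\M$-null frame and the $x^3$-direction, must be tracked coherently to recognise the intricate right-hand side of (\ref{7.19.1}); in particular (\ref{7.12.8.16}) is what aligns the two transport equations, while (\ref{8.3.3}) and (\ref{8.3.5}) govern how angular derivatives of the Jacobian factors interact with $\pmb\pi$. A subsidiary point is conceptual: the (3+1) Kirchhoff integrand is supported on the null boundary $\ti\N^-(\ti p)$, whereas (\ref{7.19.1}) is a volume integral on $\I_*^-(p)$, and the descent from codimension one to codimension zero is effected precisely by integrating out the $x^3$-fibre of the double cover. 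Finally, to confirm the overall sign and the prefactor I would specialise to Minkowski $({\mathbb R}^{2+1},\bm)$ with $\bb = n = 1$ and $\rho = \sqrt{\tau^2 - r^2}$, $\tir = \tau$, in which case all correction terms vanish and (\ref{7.19.1}) collapses to the classical Poisson formula, giving an independent sanity check on the constants.
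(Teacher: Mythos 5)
Your proposal follows essentially the same route as the paper: lift to $(\widetilde\M,\tilde\bg)=(\M\times\mathbb{R},\bg+dz^2)$, identify $\tilde L$ with $\frac{\rho}{\bb^{-1}\tau}(\fB+\p_z)$, control the null radius of injectivity by $c_*(p,t)$, apply the $(3+1)$ Kirchhoff formula of Proposition \ref{7.15.8}, and descend using the double cover $\H^+\cup\H^-$ (which halves $4\pi$ to $2\pi$), the measure relation $d\mu_{S_t}=\frac{\bb^{-1}\tau}{\rho}d\mu_{\Sigma_t}$, and the frame/curvature dictionary of Lemmas \ref{2.13.1} and \ref{7.18.2}. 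One small slip: the null cone $\widetilde\N^-(p)$ sits over $\I^-(p)$ at fibre height $z=\pm\rho(t,x)$ (Lemma \ref{lem17.1.7}), not $\pm\tir$, though this does not affect the correctness of the overall scheme.
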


As a simple application, we will use the representation formula in Theorem \ref{7.13.2.16} to recover the
Poission formula for the scalar linear wave equation  $\Box_{\bm}\phi=F$ in the (2+1)-Minkowski space-time
$({\mathbb R}^{2+1}, \bm)$, with Cauchy data given at $t=0$.  Let $p$ be a point in $({\mathbb R}^{2+1}, \bm)$
with coordinates $(x_p, t_p)$ and $t_p>0$. Note that $n=1$, $\I_*^-(p) = \{(x, t) \in {\mathbb R}^{2+1}: 0\le t< t_p-|x-x_p|\}$
and  $\rho(x, t) = \sqrt{\tau^2-r^2}$ in $\I_*^-(p)$ with $\tau = t_p-t$ and $r = |x-x_p|$. We can derive that
\begin{align*}
& \fB = -\frac{\tau}{\rho} \p_t + \frac{r}{\rho} \p_r, \quad \fBb = -\frac{\tau}{\rho} \p_t -\frac{r}{\rho} \p_r,
\quad \bb = 1, \quad \tir = r, \quad \omega =0,\\
& \pmb{\pi}=0, \quad  \zb =0, \quad \bR = 0, \quad \tr k(x, t) = \frac{2}{\rho(x, t)}, \quad \hat k =0
\end{align*}
For $J=1$ we can see that $A = \tau^{-1}$. Consequently $\I_2=\I_3 =0$ and it follows from Theorem \ref{7.13.2.16} that
\begin{align*}
2\pi \phi(p) &= -\int_0^{t_p} \int_{|x-x_p|< t_p-t} \frac{F(x,t)}{\sqrt{(t_p-t)^2-|x-x_p|^2}} dx dt \\
& \quad + \frac{1}{t_p} \int_{|x-x_p|<t_p} \frac{t_p \p_t\phi(x,0)+r\p_r\phi(x, 0)+ \phi(x,0)}{\sqrt{t_p^2-|x-x_p|^2}} dx.
\end{align*}
Hence in the Minkowski space-time $({\mathbb R}^{2+1}, \bm)$, Theorem \ref{7.13.2.16} gives the classical Poisson formula.

\subsection{A Kirchhoff formula in 3-dimensional space-time}\label{sec2.2}

We will give the proof of Theorem \ref{7.13.2.16} by a geometric method of descent. To this end, we use $(\M, \bg)$ to introduce
the manifold $\widetilde \M=\M\times {\mathbb R}$ and a Lorentzian metric $\tilde \bg$ on $\widetilde \M$ by\begin{footnote}
{We will identify $z$ with $x^3$. Besides the convention on page 3, a Greek letter with tilde is used to denote
an index from $\{0, 1, 2, 3\}$, e.g. $\tilde \a = 0, 1, 2, 3$.}\end{footnote}
\begin{equation}\label{g_1}
\tilde \bg = \tilde\bg_{\tilde\a\tilde\b} dx^{\tilde\a}dx^{\tilde\b}:=\bg_{\a\b} dx^\a dx^\b+d z^2.
\end{equation}
We use $\widetilde\bd$ to denote the Levi-Civita connection of $\tilde \bg$ on $\widetilde \M$.

We may identify $\M$ with $\M\times\{0\}$ as a submanifold of $\widetilde \M$. For a function or tensor on $(\M, \bg)$
we may use a standard procedure to extend it to a function or tensor on $(\widetilde \M, \tilde \bg)$ such that it is independent
of $z$ with vanishing $\p_z$-components; such extensions are called $\M$-tangent extensions and are denoted
by the same notation. Let $\Box_{\tilde \bg}:= \tilde \bg^{\tilde \a \tilde \b} \widetilde \bd_{\tilde \a} \widetilde \bd_{\tilde \b}$
be the d'Alembertian with respect to $(\widetilde \M, \tilde \bg)$. Then for the tensor fields $\phi$
and $F$ satisfying $\Box_\bg \phi =F$ in $\M$, we have $\Box_{\tilde \bg} \phi = F$ in $\widetilde \M$.
Therefore, to derive a representation formula of $\phi$ in $(\M, \bg)$, we will use a Kirchhoff formula in $(\widetilde \M, \tilde \bg)$.
We start with some preparation.

\begin{lemma}
Let $\widetilde \Ga$ and $\Ga$ denote the Christoffel symbols of $\tilde\bg$ and $\bg$ respectively. There hold
\begin{align}
&\widetilde \Ga_{z\ga}^\a = \widetilde \Ga_{\a\ga}^{z} = \widetilde \Ga_{\a z}^z
= \widetilde \Ga_{zz}^z = \widetilde \Ga_{zz}^\a =0; \quad
\widetilde \Ga^\ga_{\a\b} = \Ga^\ga_{\a\b}, \label{chrisplit}\\
&\widetilde {\bf D}_\fB \fB = \widetilde {\bf D}_{\p_z}\fB = \widetilde {\bf D}_{\fB} \p_z = \widetilde {\bf D}_{\p_z} \p_z= 0. \label{7.10.16.4}
\end{align}
\end{lemma}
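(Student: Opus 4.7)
The plan is to prove both identities by direct computation from the standard coordinate formula
$$
\widetilde \Ga^{\tilde\ga}_{\tilde\a\tilde\b} = \tfrac12 \tilde\bg^{\tilde\ga\tilde\d}\bigl(\p_{\tilde\a}\tilde\bg_{\tilde\b\tilde\d} + \p_{\tilde\b}\tilde\bg_{\tilde\a\tilde\d} - \p_{\tilde\d}\tilde\bg_{\tilde\a\tilde\b}\bigr),
$$
exploiting two structural features of the construction: first, by (\ref{g_1}) the metric $\tilde\bg$ is block diagonal, namely $\tilde\bg_{zz}=1$, $\tilde\bg_{\a z}=0$, and $\tilde\bg_{\a\b}=\bg_{\a\b}$, so its inverse satisfies $\tilde\bg^{zz}=1$, $\tilde\bg^{\a z}=0$, $\tilde\bg^{\a\b}=\bg^{\a\b}$; second, the $\M$-components $\bg_{\a\b}$ are independent of $z$, i.e.\ $\p_z\tilde\bg_{\a\b}=0$. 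These two observations will dispose of every Christoffel symbol in (\ref{chrisplit}) case by case.

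For the identities in (\ref{chrisplit}), I would check each symbol in turn. For $\widetilde\Ga^z_{\a\b}$ the relevant sum reduces (since $\tilde\bg^{z\a}=0$) to $\tfrac12\tilde\bg^{zz}(\p_\a\tilde\bg_{\b z}+\p_\b\tilde\bg_{\a z}-\p_z\tilde\bg_{\a\b})$, and each term vanishes by block diagonality and $z$-independence. The symbols $\widetilde\Ga^\a_{z\b}$, $\widetilde\Ga^z_{\a z}$, $\widetilde\Ga^\a_{zz}$, $\widetilde\Ga^z_{zz}$ vanish by the same reasoning; every derivative in the Koszul expression falls on a component that is either identically zero or $z$-independent. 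For $\widetilde\Ga^\ga_{\a\b}$, the sum over $\tilde\d$ collapses to the range $\{0,1,2\}$ because $\tilde\bg^{\ga z}=0$, and the surviving terms are precisely those defining $\Ga^\ga_{\a\b}$ on $(\M,\bg)$.

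Finally, (\ref{7.10.16.4}) follows by reading off the coordinate expression $\widetilde\bd_X Y = (X^{\tilde\a}\p_{\tilde\a} Y^{\tilde\ga} + \widetilde\Ga^{\tilde\ga}_{\tilde\a\tilde\b} X^{\tilde\a} Y^{\tilde\b})\p_{\tilde\ga}$. Since $\fB$ is $\M$-tangent, it has no $\p_z$ component and its $\M$-components do not depend on $z$, while $\p_z$ itself has constant components. For $\widetilde\bd_\fB\fB$, the $\p_z$-component is pure Christoffel and vanishes by $\widetilde\Ga^z_{\a\b}=0$, while the $\p_\ga$-component reduces via $\widetilde\Ga^\ga_{\a\b}=\Ga^\ga_{\a\b}$ to the $\M$-expression $(\bd_\fB\fB)^\ga$, which vanishes because $\fB$ is already geodesic in $(\M,\bg)$ by construction. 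The derivatives $\widetilde\bd_{\p_z}\fB$, $\widetilde\bd_\fB\p_z$, $\widetilde\bd_{\p_z}\p_z$ are all pure Christoffel contributions of the form $\widetilde\Ga^{\tilde\ga}_{z\,\cdot}$ or $\widetilde\Ga^{\tilde\ga}_{zz}$, which are already shown to vanish in (\ref{chrisplit}). There is no substantive obstacle here; the proof is a careful bookkeeping exercise, and the main point to emphasize is that $\M$-tangent extension and the product structure of $\tilde\bg$ together ensure that every object computed intrinsically on $(\M,\bg)$ extends unchanged to $(\widetilde\M,\tilde\bg)$.
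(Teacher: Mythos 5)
Your proposal is correct and follows essentially the same route as the paper: both compute the Christoffel symbols directly from the Koszul formula using the block-diagonal, $z$-independent structure of $\tilde\bg$ from (\ref{g_1}), and then deduce (\ref{7.10.16.4}) by writing out the coordinate expression for $\widetilde\bd$, noting that $\fB$ is $\M$-tangent and $z$-independent so the $z$-component vanishes and the $\M$-components reduce to $(\bd_\fB\fB)^\b=0$ since $\fB$ is geodesic in $(\M,\bg)$. Your version merely spells out the case-by-case bookkeeping in slightly more detail.
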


\begin{proof}
Note that $\tilde \bg_{z\a} =0$, $\tilde \bg_{zz} =1$ and $\tilde \bg_{\tilde \a \tilde \b}$ is independent of $z$.
We can obtain (\ref{chrisplit}) directly from the formula
\begin{align*}
\widetilde{\Ga}^{\tilde\a}_{\tilde \b \tilde \ga}
=\f12 {\tilde \bg}^{\tilde\a \tilde \eta} \left( \p_{\tilde \b} \tilde \bg_{\tilde \eta \tilde \ga}
+ \p_{\tilde \ga} \tilde \bg_{\tilde \b \tilde \eta}- \p_{\tilde \eta} \tilde \bg_{\tilde \b \tilde \ga}\right).
\end{align*}

Next we show (\ref{7.10.16.4}). Note that
$$
(\widetilde \bd_\fB \fB)^{\tilde \b} = \fB^{\tilde \a} \widetilde \bd_{\tilde \a} \fB^{\tilde \b}
= \fB^{\tilde \a} \left( \p_{\tilde \a} \fB^{\tilde \b} + \widetilde \Ga_{\tilde \a \tilde \eta}^{\tilde \b} \fB^{\tilde \eta}\right).
$$
Since $\tilde \bg(\fB, \p_z) =0$ and $\fB$ is independent of $z$, we may use (\ref{chrisplit}) to obtain $(\widetilde \bd_\fB \fB)^z=0$ and
$$
(\widetilde \bd_\fB \fB)^\b = \fB^\a \left( \p_\a \fB^\b + \Ga_{\a \eta}^\b \fB^\eta\right) = (\bd_\fB \fB)^\b =0,
$$
where for the last equality we used the fact that $\fB$ is geodesic. The remaining three equalities in (\ref{7.10.16.4})
can be proved similarly.
\end{proof}

For $p\in \M$, let $\widetilde \N^-(p)$ denote the backward light cone with vertex $p$ in $(\widetilde \M, \tilde \bg)$.
Then $\N^-(p)$ can be identified as a subset of $\widetilde \N^-(p)$ that is ruled by null geodesics in
$(\widetilde \M, \tilde \bg)$ with vanishing $z$-coordinate. For $(t, x, z)\in \widetilde \M$, let
$$
u =u(t, x, z):= |z| - \rho(x, t).
$$

\begin{lemma}\label{lem17.1.7}
Within $0<\tau:=t_p-t <\d_*$, the level set $\{u=0\}$ of $u$ in $(\widetilde \M, \tilde \bg)$ coincides with the
backward null cone $\widetilde\N^-(p)$ with vertex $p$.
\end{lemma}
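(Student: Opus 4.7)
The plan is to verify both inclusions $\widetilde{\N}^-(p) \subseteq \{u=0\}$ and $\{u=0\} \subseteq \widetilde{\N}^-(p)$ by an explicit parametrization of null geodesics from $p$ in $(\widetilde{\M}, \tilde{\bg})$, exploiting the product form \eqref{g_1}. The key structural observation, a direct consequence of \eqref{chrisplit} and \eqref{7.10.16.4}, is that a curve $\tilde{\gamma}(s) = (\gamma(s), z(s))$ in $\widetilde{\M}$ is a geodesic for $\tilde{\bg}$ if and only if $\gamma$ is a geodesic in $(\M, \bg)$ and $z(s)$ is affine. This reduces the null geometry of $\tilde{\bg}$ near $p$ entirely to the timelike geometry of $(\M, \bg)$ already described in Section \ref{sec2.1}.

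First I would parametrize $\widetilde{\N}^-(p)$. Any past-directed null vector $\tilde{L}_0 \in \T_p \widetilde{\M}$ admits the decomposition $\tilde{L}_0 = V + c\,\p_z$ with $V \in \T_p \M$; the relation $\tilde{\bg}(\tilde{L}_0, \tilde{L}_0) = \bg(V,V) + c^2 = 0$ together with a choice of affine parameter lets me normalize to $V \in \mathbb{H}^2$ and $c \in \{+1,-1\}$. The corresponding null geodesic from $p$ is then
\begin{equation*}
\tilde{\gamma}_V^{\pm}(s) = (\Upsilon_V(s),\, \pm s), \qquad 0 \le s < \d_*.
\end{equation*}
Along it, $\rho(\Upsilon_V(s)) = s$ by the very definition of the Lorentzian distance in Section \ref{sec2.1}, while $|z(s)| = s$, so $u \circ \tilde{\gamma}_V^{\pm} \equiv 0$. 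Since $\widetilde{\N}^-(p)$ is swept out by such geodesics as $(V, \pm)$ ranges over $\mathbb{H}^2 \times \{\pm 1\}$, this gives $\widetilde{\N}^-(p) \subseteq \{u=0\}$ in the range $0 < \tau < \d_*$.

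For the reverse inclusion I would, given $(t, x, z)$ with $u = 0$ and $0 < \tau < \d_*$, split into two cases. If $\rho(x, t) > 0$, then $(t, x) \in \I^-(p)$, and the diffeomorphism property of \eqref{12.29.4} yields a unique $V \in \mathbb{H}^2$ with $\Upsilon_V(\rho(x,t)) = (t, x)$; choosing the sign to match $\mathrm{sgn}(z)$ recovers $(t, x, z) = \tilde{\gamma}_V^{\pm}(\rho(x,t)) \in \widetilde{\N}^-(p)$. If $\rho(x,t) = 0$, then $z = 0$ and $(t, x) \in \N^-(p)$; since $\tilde{\bg}$ restricts to $\bg$ on $\M \times \{0\}$ and $\M$-tangent null vectors remain null in $\T\widetilde{\M}$, the $(2{+}1)$-null geodesic through $(t, x)$ lifts to a null geodesic of $\widetilde{\M}$, so $(t, x, 0) \in \widetilde{\N}^-(p)$.

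The only mild subtlety is that $u = |z| - \rho$ fails to be smooth on $\{z = 0\}$ and at the apex $p$, but these degenerate loci are handled explicitly by the parametrization $\tilde{\gamma}_V^{\pm}$, so no substantial obstacle arises; the proof is essentially a direct check once the geodesic-splitting observation is in hand.
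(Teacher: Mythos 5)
Your proposal is correct and follows essentially the same route as the paper: both rest on the geodesic-splitting observation from (\ref{chrisplit}), identify $\rho(\Upsilon_V(s))$ with the (affine) length of the maximizing timelike geodesic inside the timelike radius of injectivity, and handle the $z=0$ locus by noting $\N^-(p)\subset\widetilde\N^-(p)$. The only cosmetic difference is that you normalize the $\M$-component of the null direction to $\mathbb{H}^2$ up front, whereas the paper integrates $|z'(s)|=(-\bg(\Upsilon'(s),\Upsilon'(s)))^{1/2}$ along an unnormalized geodesic; the content is identical.
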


\begin{proof}
By the geodesic equation and (\ref{chrisplit}) it is easy to see that $\widetilde \Upsilon(s) := (\Upsilon(s), z(s))$
is a geodesic in $(\widetilde \M, \tilde \bg)$ with $\widetilde \Upsilon(0) = p$ if and only if $\Upsilon(s)$ is a
geodesic in $(\M, \bg)$ with $\Upsilon(0)=p$ and $z(s) = c s$ for some constant $c$.

If $\widetilde \Upsilon(s) := (\Upsilon(s), z(s))$ is a null geodesic in $(\widetilde \M, \tilde \bg)$ initiating from $p$,
then
$$
0 = \tilde \bg(\widetilde \Upsilon'(s), \widetilde \Upsilon'(s)) = \bg(\Upsilon'(s), \Upsilon'(s))+ |z'(s)|^2
$$
which implies that
$$
|z'(s)| = \left(-\bg(\Upsilon'(s), \Upsilon'(s))\right)^{1/2}.
$$
Integrating this equation with respect to $s$ and using the definition of the Lorentzian distance $\rho$, we can obtain
$$
0 = \int_0^s \left(|z'(\tilde s)| - \left(-\bg(\Upsilon'(\tilde s), \Upsilon'(\tilde s))\right)^{1/2}\right) d \tilde s
= |c| s - \rho(\Upsilon(s)) = |z(s)| - \rho(\Upsilon(s)).
$$
This shows that, within $\tau<\d_*$, $\widetilde\Upsilon \subset \{u=0\}$ and hence $\widetilde\N^-(p) \subset \{u=0\}$.

Conversely, let $(t,x, z)$ be any point on $\{u=0\}$ with $\tau=t_p-t<\d_*$. If $z=0$ then $\rho(t, x) =0$ and hence
$(t, x, z) \in \N^-(p)\subset \widetilde \N^-(p)$. Thus we may assume $z\ne 0$. We can find a time-like geodesic
$\Upsilon(s)$ in $(\M, \bg)$ initiating from $p$, with $\Upsilon(s_0) = (t, x)$ for some $s_0>0$. Set
$$
\widetilde \Upsilon(s) = (\Upsilon(s), \frac{z}{s_0} s).
$$
Then $\widetilde \Upsilon(s_0) = (t, x, z)$ and $\widetilde \Upsilon(s)$ is a geodesic in $(\widetilde \M, \tilde \bg)$.
Moreover
$$
\tilde \bg(\widetilde \Upsilon'(s), \widetilde \Upsilon'(s)) = \bg(\Upsilon'(s), \Upsilon'(s)) + \frac{z^2}{s_0^2}.
$$
Since $\Upsilon(s)$ is a geodesic in $(\M, \bg)$, $\bg(\Upsilon'(s), \Upsilon'(s))$ is a constant and thus
$$
\rho(t,x) = \int_0^{s_0} (-\bg(\Upsilon'(s), \Upsilon'(s))^{1/2} ds = s_0 (-\bg(\Upsilon'(s), \Upsilon'(s))^{1/2}.
$$
Consequently
$$
\tilde \bg(\widetilde \Upsilon'(s), \widetilde \Upsilon'(s)) = \frac{z^2-\rho(t, x)^2}{s_0^2} =0.
$$
This shows that $\widetilde \Upsilon(s)$ is a null geodesic in $(\widetilde \M, \tilde \bg)$ and hence $(t, x, z) \in \widetilde \N^-(p)$.
Therefore, within $\tau<\d_*$ we have $\{u=0\} \subset \widetilde \N^-(p)$.
\end{proof}

According to Lemma \ref{lem17.1.7}, within $\tau<\d_*$, the null cone $\widetilde\N^-(p)$ is the union of  three parts:
$\N^-(p)$, $\H^+$ and $\H^-$, where
$$
\H^+ := \{ z = \rho(t,x), \tau<\d_*\} \quad \mbox{ and } \quad \H^- := \{z=-\rho(t,x), \tau<\d_*\}.
$$
Note that both $\H^+$ and $\H^-$ can be regarded as graphs in $(\widetilde \M, \tilde\bg)$ over
$\I^-(p)$.

\begin{lemma}\label{2.27.1.17}
Within $\tau<\d_*$, any null geodesic in $(\widetilde \M, \tilde \bg)$ initiating from $p$ lies completely in
either $\N^-(p)$, or $\H^+$, or $\H^-$.
\end{lemma}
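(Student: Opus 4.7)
The plan is to read off the decomposition of the null geodesic already contained in the proof of Lemma \ref{lem17.1.7}, and then trichotomize on the sign of the $z$-component of the initial velocity. The key structural fact is the decoupling of the $z$-direction in $(\widetilde\M,\tilde\bg)$, which follows from (\ref{chrisplit}): since $\widetilde\Ga^{\tilde\a}_{\tilde\b\tilde\ga}$ vanishes whenever any of the three indices equals $z$ (except for the $\Ga$-block on $\M$), the geodesic equation splits into the $\M$-geodesic equation for $\Upsilon$ and the equation $z''(s)=0$ for the $z$-component. Hence any geodesic $\widetilde\Upsilon$ in $(\widetilde\M,\tilde\bg)$ with $\widetilde\Upsilon(0)=p$ takes the form
\begin{equation*}
\widetilde\Upsilon(s)=(\Upsilon(s),\,c\,s),\qquad c:=z'(0)\in\mathbb{R},
\end{equation*}
with $\Upsilon$ a geodesic in $(\M,\bg)$ from $p$.

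Next, I would impose the null condition. As in the proof of Lemma \ref{lem17.1.7}, the identity $\tilde\bg(\widetilde\Upsilon',\widetilde\Upsilon')=\bg(\Upsilon',\Upsilon')+c^2=0$, together with the definition of the Lorentzian distance $\rho$ and the assumption $\tau<\d_*$ (so that $\rho$ is smooth and the exponential map is a diffeomorphism), gives $|c|\,s=\rho(\Upsilon(s))$ for all relevant $s$.

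I would now trichotomize on $c$, which is constant along the geodesic:
\begin{itemize}
\item If $c=0$, then $z(s)\equiv0$ and $\rho(\Upsilon(s))\equiv0$, so $\Upsilon(s)\in\N^-(p)$ and hence $\widetilde\Upsilon(s)\in\N^-(p)$ throughout.
\item If $c>0$, then $z(s)=c\,s=\rho(\Upsilon(s))\ge 0$ for $s\ge 0$, so $\widetilde\Upsilon(s)\in\H^+$.
\item If $c<0$, then $z(s)=c\,s$ and $|z(s)|=-c\,s=\rho(\Upsilon(s))$, i.e.\ $z(s)=-\rho(\Upsilon(s))$, so $\widetilde\Upsilon(s)\in\H^-$.
\end{itemize}
Since the sign of $c$ is preserved along $\widetilde\Upsilon$, the geodesic lies entirely in one of the three pieces.

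There is essentially no hard step; the statement is a direct corollary of the warped structure of $\tilde\bg$ together with the null condition. The only point that needs care is to ensure the argument runs globally in the range $\tau<\d_*$, where $\rho$ is a well-defined smooth function and the identification of $\widetilde\N^-(p)$ with $\{u=0\}$ supplied by Lemma \ref{lem17.1.7} is valid; outside this range the three pieces $\N^-(p)$, $\H^+$, $\H^-$ could in principle intersect along conjugate or cut points and the clean trichotomy above would fail.
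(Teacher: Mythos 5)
Your proposal is correct and follows exactly the paper's (very brief) argument: the geodesic equation decouples so that $z(s)=cs$ for a constant $c$, and the trichotomy on the sign of $c$ gives the result; you merely spell out the null condition $|c|s=\rho(\Upsilon(s))$ already established in the proof of Lemma \ref{lem17.1.7}. No discrepancies.
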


\begin{proof}
Since the $z$-component of a geodesic in $(\widetilde \M, \tilde \bg)$ has the form $z(s)= c s$ for some constant $c$,
the result then follows according to the sign of $c$.
\end{proof}

Let $\widetilde\Sigma_t$ denote the level set of $t$ in $(\widetilde\M, \tilde\bg)$, let $\tilde g$ be the induced
metric of $\tilde \bg$ on $\widetilde \Sigma_t$, and let $\widetilde \nab$ denote the Levi-Civita connection of  $\tilde g$ on $\widetilde{\Sigma}_t$. We set $S_t := \widetilde \N^-(p)\cap \widetilde \Sigma_t$. For $\tau_0<\delta_*$,
we have
$$
\widetilde \N^-(p)\cap\{t_p-t\le \tau_0\}= \bigcup_{\tau\le\tau_0} S_{t_p-\tau}
$$
and thus $\{S_{t_p-\tau}\}_\tau$ forms a time foliation of $\widetilde\N^-(p)$. Let $\tilde N$ be
the radial normal of $S_t$ in $\widetilde \Sigma_t$. We will derive the formula for $\tilde N$. We will
only consider the half cone $\H^+$, since $\H^-$ can be treated in the same way. We first have
$$
\widetilde \nab u=\p_z-\nab \rho.
$$
Let ${\tilde a}^{-1}=|\widetilde \nab u|_{\tilde g}$. We have ${\tilde a}^{-2}=1+|\nab \rho|_g^2$. It then follows from
(\ref{aa1}) that
\begin{equation}\label{aa}
{\tilde a}^{-1}=\frac{\bb^{-1}\tau}{\rho}.
\end{equation}
Consequently
\begin{equation}\label{2.13.4}
\tilde N= \frac{\widetilde\nab u}{|\widetilde \nab u|_{\tilde g}} = {\tilde a}(\p_z-\nab \rho).
\end{equation}
Let $\tilde \ga$ denote the induced metric on $S_t$. We use $\widetilde \sn$ to denote the Levi-Civita connection
of $\tilde\ga$ and use $\widetilde{\slashed{\Delta}}$ to denote the corresponding Laplace-Beltrami operator. By setting $v_t=\sqrt{|\tilde{\ga}|}/\sqrt{|\ga_{{\Bbb S}^2|}}$, we have $d\mu_{S_t}
= v_t d\mu_{{\mathbb S}^2}$. Since $S_t$ can be viewed as a graph over $\Sigma_t$ locally, we have
\begin{equation}\label{ael}
d\mu_{S_t} = \sqrt{1+|\nabla \rho|_g^2} d\mu_{\Sigma_t}={\tilde a}^{-1} d\mu_{\Sigma_t}.
\end{equation}

Now we introduce the null frame
\begin{equation*}
\tilde L=-\bT+\tilde N,\quad \tilde \Lb=-\bT-\tilde N
\end{equation*}
on $\widetilde\N^-(p)\cap \{\tau<\delta_*\}$. Clearly, $\l \tilde L, \tilde\Lb\r=-2$.
In view of (\ref{2.13.4}), we have on $\H^+$ that
\begin{equation}\label{7.24.1}
\begin{split}
\tilde L&=\frac{\rho}{\bb^{-1}\tau} \left(-\frac{\bb^{-1}\tau}{\rho}\bT+a^{-1} \bN\right)+\frac{\rho}{\bb^{-1}\tau}\p_z,\\
\tilde \Lb&=\frac{\rho}{\bb^{-1}\tau} \left(-\frac{\bb^{-1}\tau}{\rho}\bT-a^{-1}\bN \right)-\frac{\rho}{\bb^{-1}\tau}\p_z.
\end{split}
\end{equation}
By using (\ref{fb1}) and (\ref{fb2}) we can write
\begin{align}
&\tilde L=\frac{\rho}{\bb^{-1}\tau}(\fB+\p_z),\qquad \tilde\Lb=\frac{\rho}{\bb^{-1}\tau}(\underline{\fB}- \p_z)\label{barlb}.
\end{align}

In $(\widetilde \M, \tilde \bg)$ we define the following projection tensors
\begin{equation*}
\widetilde {\Pi}^{\tilde\a\tilde\b} = \tilde \bg^{\tilde\a\tilde\b}-\delta_z^{\tilde\a}\delta_z^{\tilde\b},\qquad
\Pi^{\tilde\a \tilde\b} = \tilde \bg^{\tilde\a \tilde\b} + \f12({\tilde L}^{\tilde\a} {\tilde\Lb}^{\tilde\b}
+{\tilde \Lb}^{\tilde\a} {\tilde L}^{\tilde\b}).
\end{equation*}
For the induced metric $\tilde \ga$ on $S_t$, we have $\tilde\ga^{\tilde\a\tilde\b} = \Pi^{\tilde\a\tilde\b}$
if $\tilde\ga$ is regarded as an $S_t$-tangent tensor in $(\widetilde \M, \tilde\bg)$.
We can project $\Pi^{\tilde\a\tilde\b}$ to $(\M,\bg)$ by $\slashed{\Pi}^{\tilde\a\tilde\b}
=\Pi_{\tilde\a'\tilde\b'}\widetilde{\Pi}^{\tilde\a'\tilde\a} \widetilde {\Pi}^{\tilde\b'\tilde\b}$.
Noting that $\widetilde \Pi^{z\tilde\b} =0$ and $\widetilde \Pi^{\a\b}=\bg^{\a\b}$, we have $\slashed{\Pi}^{z\tilde\b}=0$
and
\begin{align}\label{3.16.1.17}
\slashed{\Pi}^{\a\b} =  \bg^{\a \a'} \bg^{\b \b'} \Pi_{ \a' \b'}
= \Pi^{\a\b} = \bg^{\a\b} + \f12 (\tilde L^\a \tilde \Lb^\b + \tilde L^\b \tilde \Lb^\a).
\end{align}
In view of (\ref{barlb}) we have $\tilde L^\a = \frac{\rho}{\bb^{-1}\tau} \fB^\a$ and $\tilde \Lb^\a = \frac{\rho}{\bb^{-1}\tau} \fBb^\a$.
Combining this with the above equation shows that
\begin{align}\label{2.12.3}
\slashed{\Pi}^{\a\b}=\bg^{\a\b}+\f12 \frac{\rho^2}{\bb^{-2}\tau^2}(\fB^\a {\underline{\fB}}^\b+\fB^\b {\underline{\fB}}^\a).
\end{align}
By using (\ref{fb1}) and (\ref{fb2}), we can derive from (\ref{2.12.3}) that
\begin{equation}\label{2.12.4}
\slashed{\Pi}^{\a\b}=\bg^{\a\b}+\bT^\a\bT^\b-\frac{\tir^2}{\bb^{-2}\tau^2}\bN^\a \bN^\b.
\end{equation}
Let $\up{3}\Pi^{\a\b}=\bg^{\a\b}+\bT^\a\bT^\b-\bN^\a \bN^\b$,  the above identity can be recast as
\begin{equation}\label{7.13.1.16}
\slashed{\Pi}^{\a\b}=\up{3}\Pi^{\a\b}+\frac{\rho^2}{\bb^{-2}\tau^2}\bN^\a\bN^\b.
\end{equation}

We now introduce a set of geometric notion on $(\widetilde \M, \tilde \bg)$ which will be used in the
Kirchhoff formula.

\begin{definition}\label{7.21.1}

\begin{enumerate}[leftmargin = 0.8cm]
\item We denote by $\tilde{\pmb{\pi}}$ the second fundamental form of $(\widetilde\Sigma_t, \tilde g)\subset(\widetilde \M, \tilde\bg)$, i.e.
\begin{equation*}
\tilde{\pmb{\pi}}(X, Y)=-\tilde \bg(\widetilde{\bf D}_X {\bf T}, Y)
\end{equation*}
for $X, Y\in \T \widetilde\Sigma_t$ and $\tilde g$ is the induced metric of $\tilde \bg$ on $\bar\Sigma_t$.
We denote by  $\emph{\Tr} \tilde{\pmb{\pi}}$ the trace part of $\tilde{\pmb{\pi}}$.

\item We define the null second fundamental forms on $\widetilde\N^-(p)$ in the extended spacetime $(\widetilde \M, \tilde\bg)$ by
\begin{equation}\label{nf_1}
\tilde{\chi}(X, Y)=\tilde\bg( \widetilde{\bf D}_X \tilde L, Y),\qquad
\tilde{\chib}(X, Y)=\tilde\bg( \widetilde{\bf D}_X \tilde \Lb, Y),
\end{equation}
where $X,\,Y$ are in $\T S_t$. The trace parts of the above symmetric $S_t$-tangent tensor fields are
denoted by $\emph{\tr} \tilde\chi$ and $\emph{\tr} \tilde\chib$.

\item We introduce the connection coefficients
\begin{eqnarray}
\tilde \omega = -\f12 \l \widetilde{\bf D}_{\tilde L} \tilde \Lb, \tilde L\r,
\quad  \underline{\mu}={\tilde L} \emph{\tr} {\tilde\chib}+\f12 \emph{\tr} \tilde \chi \emph{\tr} \tilde\chib,
\quad  \tilde\zb(X) = \f12\tilde\bg(\widetilde {\bf D}_{\tilde L} \tilde \Lb, X),\label{7.15.1}
\end{eqnarray}
where $X\in \T S_t$.
\end{enumerate}
\end{definition}

The following lemma gives some preliminary results on how to represent geometric quantities in $(\widetilde \M, \tilde \bg)$
in terms of geometric quantities in $(\M, \bg)$.

\begin{lemma}\label{2.13.1}
There hold
\begin{align}
&\emph{\tr} \tilde \chi = \frac{\rho}{\bb^{-1}\tau} \emph{\tr} k, \label{area_exp}\\
&\emph{tr} \tilde{\pmb\pi} = \emph{\Tr} \pmb{\pi} - \frac{\bb^2\tir^2}{\tau^2} \pmb{\pi}_{\bN\bN}, \label{chib}\\
&\emph{\tr}\tilde \chib + \emph{\tr} \tilde \chi = 2 \emph{\tr} \tilde{\pmb \pi}, \label{2.13.2}\\
&\tilde{\pmb\pi}_{\tilde N \tilde N} = \frac{\tir^2}{\bb^{-2} \tau^2} \pmb{\pi}_{\bN\bN}, \label{2.14.6} \\
&\tilde \omega  = \omega, \label{7.15.3}\\
& \tilde L\log \bb =\frac{n^{-1}-\bb}{\tau}-\omega. \label{llogb}
\end{align}
where $\tilde{\pmb\pi}$ is defined in Definition \ref{7.21.1} and  $\emph{\tr} \tilde{\pmb\pi}
= -\widetilde {\bf D}_{\ti\mu} {\bf T}_{\ti\nu}\Pi^{\ti\mu\ti\nu}$ is the trace of $\tilde{\pmb \pi}$ restricted to $S_t$.
\end{lemma}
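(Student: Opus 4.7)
The plan is to prove the six identities by reducing every $(\widetilde\M,\tilde\bg)$ quantity to a $(\M,\bg)$ quantity, using: (i) the Christoffel splitting \eqref{chrisplit}, which gives $\widetilde\bd_X Y=\bd_X Y$ whenever $X,Y$ are $\M$-tangent extensions; (ii) the parallelism relations $\widetilde\bd_{\p_z}\,\cdot\,=\widetilde\bd_{\,\cdot\,}\p_z=0$ coming from \eqref{7.10.16.4}; (iii) the decomposition \eqref{barlb}, namely $\tilde L=\frac{\rho}{\bb^{-1}\tau}(\fB+\p_z)$ and $\tilde\Lb=\frac{\rho}{\bb^{-1}\tau}(\fBb-\p_z)$; and (iv) the formula $\tilde N=\tilde a(\p_z+a^{-1}\bN)$ with $\tilde a=\rho/(\bb^{-1}\tau)$ that follows from \eqref{2.13.4} and \eqref{aa}. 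As a concrete orthonormal basis of $TS_t$ on $\H^+$ I take $\{e_\sl{A},\tilde e\}$ with $\tilde e:=\frac{\rho}{\bb^{-1}\tau}(\bN-a^{-1}\p_z)$; this is unit because $1+a^{-2}=\bb^{-2}\tau^2/\rho^2$, and is manifestly $\tilde\bg$-orthogonal to $e_\sl{A}$ and to $\tilde L,\tilde \Lb$.

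For \eqref{area_exp} I compute $\tilde\chi$ separately on $e_\sl{A}$ and $\tilde e$. Because $\tilde L$ is null and tangent to $\widetilde\N^-(p)\supset S_t$, the scalar-factor derivative produces a term proportional to $\tilde L$ that is $\tilde\bg$-orthogonal to $TS_t$, so only $\widetilde\bd_X(\fB+\p_z)$ contributes. Using (i)--(ii) this gives $\tilde\chi(e_\sl{A},e_\sl{A})=\frac{\rho}{\bb^{-1}\tau}k(e_\sl{A},e_\sl{A})$ and $\tilde\chi(\tilde e,\tilde e)=\frac{\rho^2}{\bb^{-2}\tau^2}\bg(\bd_\bN\fB,\bN)$. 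The substantive step is to convert the last expression into $k(\Nb,\Nb)$: since $\bd_\fB\fB=0$ and $\bd_X\fB\perp\fB$, for every $X\in T\M$ one has $\bg(\bd_X\fB,X)=k(\Pi_{H_\rho}X,\Pi_{H_\rho}X)$ with $\Pi_{H_\rho}Y:=Y+\bg(Y,\fB)\fB$. With $\bg(\bN,\fB)=a^{-1}$ and \eqref{7.12.12.16} a short calculation gives $\Pi_{H_\rho}\bN=\frac{\bb^{-1}\tau}{\rho}\Nb$, hence $\bg(\bd_\bN\fB,\bN)=\frac{\bb^{-2}\tau^2}{\rho^2}k(\Nb,\Nb)$, and summing yields $\tr\tilde\chi=\frac{\rho}{\bb^{-1}\tau}\tr k$.

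Identity \eqref{chib} follows from the same reduction applied to $\tilde{\pmb{\pi}}$: one finds $\tilde{\pmb{\pi}}(e_\sl{A},e_\sl{A})=\pmb{\pi}(e_\sl{A},e_\sl{A})$ and $\tilde{\pmb{\pi}}(\tilde e,\tilde e)=\frac{\rho^2}{\bb^{-2}\tau^2}\pmb{\pi}_{\bN\bN}$; combining with $\Tr\pmb{\pi}=\pmb{\pi}(e_\sl{A},e_\sl{A})+\pmb{\pi}_{\bN\bN}$ and the algebraic identity $\rho^2/(\bb^{-2}\tau^2)-1=-\bb^2\tir^2/\tau^2$ gives the claimed formula. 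Identity \eqref{2.13.2} is immediate from $\tilde L+\tilde\Lb=-2\bT$: summing the definitions \eqref{nf_1} on $TS_t\times TS_t$ yields $\tilde\chi+\tilde\chib=-2\widetilde\bd\bT|_{TS_t\times TS_t}=2\tilde{\pmb{\pi}}|_{TS_t}$. For \eqref{2.14.6} I simply substitute $\tilde N=\tilde a(\p_z+a^{-1}\bN)$ into $\tilde{\pmb{\pi}}(\tilde N,\tilde N)$; the $\p_z$ cross-terms vanish by \eqref{7.10.16.4}, and the residue $\tilde a^2a^{-2}\pmb{\pi}_{\bN\bN}$ simplifies via $\tilde a=\rho/(\bb^{-1}\tau)$ and $a^{-1}=\tir/\rho$.

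For \eqref{7.15.3} the key observation is that $\fB+\p_z$ is a geodesic vector field in $(\widetilde\M,\tilde\bg)$ by \eqref{7.10.16.4}; consequently $\tilde L$ is pregeodesic with $\widetilde\bd_{\tilde L}\tilde L=\tilde L(\log(\rho/\bb^{-1}\tau))\tilde L$. Differentiating $\tilde\bg(\tilde L,\tilde\Lb)=-2$ along $\tilde L$ then delivers $\tilde\omega=-\tilde L(\log(\rho/\bb^{-1}\tau))$, which collapses to $-\fB(\rho/\bb^{-1}\tau)=\omega$ since $\p_z$ annihilates $\rho/\bb^{-1}\tau$. Finally, \eqref{llogb} is obtained by inserting \eqref{7.12.8.16} into $\tilde L(\log\bb)=\frac{\rho}{\bb^{-1}\tau}\fB(\log\bb)$ and simplifying. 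The only place where a genuinely nontrivial idea is needed is the projection-to-$H_\rho$ step in \eqref{area_exp}; every other identity reduces to a bookkeeping exercise using (i)--(iv).
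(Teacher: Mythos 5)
Your overall strategy is sound and close in spirit to the paper's: both proofs reduce every quantity on $(\widetilde\M,\tilde\bg)$ to one on $(\M,\bg)$ via the Christoffel splitting (\ref{chrisplit}), the parallelism of $\p_z$, and the decomposition (\ref{barlb}). The difference is that the paper works with the abstract projection tensor $\Pi^{\tilde\a\tilde\b}$ rather than an explicit frame --- for (\ref{area_exp}) it writes $\tr\tilde\chi=\frac{\rho}{\bb^{-1}\tau}\widetilde\bd_{\tilde\a}\fB^{\tilde\a}=\frac{\rho}{\bb^{-1}\tau}\bd_\a\fB^\a=\frac{\rho}{\bb^{-1}\tau}\tr k$ after checking that the $\tilde L\tilde\Lb$-correction terms vanish, which avoids any componentwise bookkeeping --- whereas you evaluate on the adapted orthonormal frame $\{e_{\sl{A}},\tilde e\}$. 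Your frame is correctly normalized and correctly tangent to $S_t$, and your treatments of (\ref{chib}), (\ref{2.13.2}), (\ref{2.14.6}), (\ref{7.15.3}) and (\ref{llogb}) all check out and essentially coincide with the paper's computations.

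There is, however, a concrete factor error in your derivation of (\ref{area_exp}). Writing $f=\rho/(\bb^{-1}\tau)$, the component $\tilde\chi(\tilde e,\tilde e)=\tilde\bg(\widetilde\bd_{\tilde e}\tilde L,\tilde e)$ picks up \emph{three} factors of $f$: one from $\tilde L=f(\fB+\p_z)$, one from the differentiation slot $\tilde e=f(\bN-a^{-1}\p_z)$, and one from the contraction slot. The correct value is therefore $f^{3}\,\bg(\bd_\bN\fB,\bN)$, not $f^{2}\,\bg(\bd_\bN\fB,\bN)$ as you state; you have most likely carried over the $f^{2}$ from the $\tilde{\pmb\pi}(\tilde e,\tilde e)$ computation, where only two factors appear because $\bT$, unlike $\tilde L$, carries no prefactor $f$. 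Your projection identity $\bg(\bd_\bN\fB,\bN)=f^{-2}k(\Nb,\Nb)$ is correct, so with your stated $f^{2}$ the trace would come out as $f\,k_{\sl{A}\sl{A}}+k_{\Nb\Nb}$, which is \emph{not} $f\,\tr k$ and so the claimed conclusion does not follow from your intermediate formulas. With the corrected $f^{3}$ one gets $\tilde\chi(\tilde e,\tilde e)=f\,k(\Nb,\Nb)$ and the sum is $f(k_{\sl{A}\sl{A}}+k_{\Nb\Nb})=f\,\tr k$ as required. The fix is one line, but as written the proof of (\ref{area_exp}) does not close; please repair this step (or adopt the paper's divergence identity, which bypasses the issue entirely).
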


\begin{proof}
To obtain (\ref{area_exp}), we note that
$
\tr \tilde \chi = \Pi^{\tilde\a \tilde \b} \tilde \chi_{\tilde\a \tilde\b}
= \widetilde \bd_{\tilde \a} \tilde L^{\tilde \b} \Pi^{\tilde \a}_{\tilde \b}.
$
In view of (\ref{barlb}) we have
\begin{align*}
\tr \tilde \chi &=\p_{\tilde\a} \left(\frac{\rho}{\bb^{-1} \tau}\right) (\fB^{\tilde \b} +\p_z^{\tilde \b})
\Pi_{\tilde\b}^{\tilde \a} + \frac{\rho}{\bb^{-1}\tau} \widetilde \bd_{\tilde \a} (\fB^{\tilde \b}+\p_z^{\tilde \b})
\Pi_{\tilde \b}^{\tilde \a}.
\end{align*}
From (\ref{chrisplit}) we have $\widetilde \bd_{\tilde \a} \p_z^{\tilde \b}=0$. By the definition of $\Pi_{\tilde \b}^{\tilde \a}$
it is straightforward to check that
$$
(\fB^{\tilde \b} + \p_z^{\tilde \b}) \Pi_{\tilde \b}^{\tilde \a}
= \frac{\bb^{-1}\tau}{\rho} \tilde L^{\tilde \b} \Pi_{\tilde \b}^{\tilde \a} =0.
$$
Therefore
\begin{align*}
\tr \tilde \chi &=\frac{\rho}{\bb^{-1}\tau} \widetilde \bd_{\tilde \a} \fB^{\tilde\b} \Pi_{\tilde\b}^{\tilde\a}
=\frac{\rho}{\bb^{-1}\tau} \left(\widetilde \bd_{\tilde \a} \fB^{\tilde \a} + \f12 \widetilde \bd_{\tilde \a} \fB^{\tilde \b}
(\tilde L^{\tilde \a} \tilde \Lb_{\tilde \b} + \tilde \Lb^{\tilde \a} \tilde L_{\tilde \b})\right).
\end{align*}
By using (\ref{chrisplit}), $\tilde \bg(\fB, \p_z)=0$ and the fact that $\fB$ is independent of $z$, we can derive that
\begin{equation}\label{3.17.2.17}
\widetilde \bd_{\tilde \a} \fB^{\tilde \a} = \bd_\a \fB^\a, \quad \widetilde{\bd}_{\ti\a}\fB^z=0, \quad \widetilde{\bd}_{\a}\fB=\bd_\a \fB.
\end{equation}
We claim
\begin{equation}\label{3.17.1.17}
  \widetilde \bd_{\tilde \a} \fB^{\tilde \b}
(\tilde L^{\tilde \a} \tilde \Lb_{\tilde \b} + \tilde \Lb^{\tilde \a} \tilde L_{\tilde \b}) =0.
\end{equation}
Combining the first identity in (\ref{3.17.2.17}) with (\ref{3.17.1.17}) implies
$$
\tr \tilde \chi = \frac{\rho}{\bb^{-1} \tau} \bd_\a\fB^\a=\frac{\rho}{\bb^{-1}\tau}\tr k.
$$
To see (\ref{3.17.1.17}), we first can obtain $\widetilde \bd_{\ti L}\fB^{\ti\b} =0$ from (\ref{barlb}) and (\ref{7.10.16.4}).
It follows by using the second and the third identities in (\ref{3.17.2.17}), the second identity in (\ref{7.10.16.4}) and (\ref{barlb}) that
\begin{align*}
\widetilde{\bd}_{\ti\a} \fB^{\ti \b} {\ti L}_{\ti\b}&=\widetilde{\bd}_{\ti\a}\fB^z{\ti L}_z+\widetilde{\bd}_{\ti\a} \fB^\b \ti L_\b=\frac{\rho}{\bb^{-1}\tau} {\widetilde\bd}_{\ti\a}\fB^\b \fB_\b=0
\end{align*}
where we used $\bg(\fB, \fB)=-1$. Hence (\ref{3.17.1.17}) is proved and the proof of (\ref{area_exp}) is thus complete.

To show (\ref{chib}), we note that $\tilde \bg(\bT, \p_z) =0$ and $\bT$ is independent of $z$. Thus, by using (\ref{chrisplit})
we can derive that $\widetilde \bd_{\tilde\a} \bT_{\tilde\b} \Pi^{\tilde\a \tilde\b}
= \bd_\a \bT_\b \Pi^{\a\b} .$
Therefore, by using   (\ref{3.16.1.17}) and (\ref{2.12.4}) we deduce that
\begin{align*}
\tr \tilde{\pmb \pi} &=-\bd_\a \bT_\b \slashed{\Pi}^{\a\b} = -\bd_\a \bT_\b \left(\bg^{\a\b}+\bT^\a\bT^\b-\frac{\tir^2}{\bb^{-2}\tau^2} \bN^\a \bN^\b\right)\\
&= \Tr \pmb{\pi} + \frac{\tir^2}{\bb^{-2}\tau^2} \l\bd_\bN \bT, \bN\r
=\Tr \pmb{\pi} - \frac{\tir^2}{\bb^{-2}\tau^2}\pmb{\pi}_{\bN\bN},
\end{align*}
where we used the fact that  $\bg^{\a\b}+\bT^\a\bT^\b$ is the standard  projection to $\Sigma_t\subset \M$.
Hence  (\ref{chib}) is proved.

From the definition of $\tilde \chi$, $\tilde\chib$ and $\tilde{\pmb\pi}$, it is straightforward to derive (\ref{2.13.2}).

Next we derive (\ref{2.14.6}).  Note that $\bT= n^{-1} \p_t$ and $\tilde \bg(\p_t, \tilde N)=0$, we have
$
\tilde{\pmb \pi}_{\tilde N\tilde N} = - n^{-1} \tilde \bg(\widetilde \bd_{\tilde N} \p_t, \tilde N).
$
In view of (\ref{2.13.4}), (\ref{2.13.4.1}) and (\ref{chrisplit}) we can further obtain
\begin{align*}
\tilde{\pmb \pi}_{\tilde N\tilde N} & = -n^{-1} \bar a^2 a^{-1} \left[\tilde \bg(\widetilde \bd_\bN \p_t, \p_z)
+ a^{-1} \tilde \bg(\widetilde \bd_\bN \p_t, \bN)\right] = - n^{-1} \bar a^2 a^{-2} \bg(\bd_\bN \p_t, \bN)\\
& = -\bar a^2 a^{-2} \bg(\bd_\bN \bT, \bN) = \bar a^2 a^{-2} \pmb{\pi}_{\bN\bN} = \frac{\tir^2}{\bb^{-2} \tau^2} \pmb{\pi}_{\bN\bN}.
\end{align*}

To show (\ref{7.15.3}), from (\ref{barlb}), (\ref{7.10.16.4}) and $\l \tilde L, \tilde \Lb\r =-2$ it follows that
\begin{align*}
\tilde \omega &= \f12 \left\l (\fB+\p_z) \left(\frac{\rho}{\bb^{-1}\tau}\right) \tilde L
+\frac{\rho^2}{\bb^{-2}\tau^2}\widetilde\bd_{\fB+\p_z} (\fB+\p_z), \tilde \Lb \right\r
= - (\fB + \p_z) \left(\frac{\rho}{\bb^{-1}\tau}\right).
\end{align*}
Since $\frac{\rho}{\bb^{-1} \tau}$ is independent of $z$, we therefore obtain $\tilde \omega=
- \fB(\frac{\rho}{\bb^{-1}\tau}) = \omega$ which is (\ref{7.15.3}).

To obtain (\ref{llogb}),  by using (\ref{barlb}) and the fact that $\bb$ is independent of $z$,
we can conclude from (\ref{7.12.8.16}) that
\begin{align*} 
\tilde L \log \bb = -\bb \tilde L(\bb^{-1}) = -\frac{\rho}{\bb^{-2}\tau}\fB(\bb^{-1})
= \frac{n^{-1}-\bb}{\tau} - \omega.
\end{align*}
The proof is therefore complete.
\end{proof}

Recall that the Kirchhoff representation formula in \cite{KSob} only holds on the regular part of $\widetilde \N^-(p)$.
To define the regular part, we need the notion of null radius of injectivity.
Since we mainly rely on the time foliation to analyze $\widetilde\N^-(p)$, we only need to introduce the
past null radius of injectivity at $p$ with respect to the global time function $t$.

Let us briefly recall the definition of past null radius of injectivity; one may consult (\cite{KSob,KR2008,KR2, Wang10})
for more details. We parametrize the set
of past null vectors in $\T_p \widetilde \M$ in terms of $\omega \in {\mathbb S}^2$, the standard sphere in ${\mathbb R}^3$.
Then, for each $\omega \in {\mathbb S}^2$, let $\ell_\omega$ be the null vector in $\T_p \widetilde \M$ normalized with
respect to the future, unit, timelike vector $\bT_p$ by
$$
\widetilde \bg(\ell_\omega, \bT_p) = 1
$$
and let $\widetilde \Gamma_\omega(s)$ be the past null geodesic satisfying $\widetilde\Gamma_\omega(0)= p$ and
$\frac{d \widetilde\Gamma_\omega}{ds}(0)= \ell_\omega$. We define the null vector field $\tilde L'$ on $\widetilde \N^-(p)$ by
$$
\tilde L'(\widetilde\Gamma_\omega(s)) = \frac{d}{d s} \widetilde\Gamma_\omega(s)
$$
which may only be smooth almost everywhere on $\widetilde \N^-(p)$ and can be multivalued on a set of exceptional points.
We can choose the parameter $s$ with $s(p)=0$ so that $\widetilde \bd_{\tilde L'} \tilde L'=0$ and $\tilde L'(s) =1$.
This $s$ is called the affine parameter.

We define the past null radius of injectivity $\tilde i_*(p,t)$ at $p$ to be the supremum over all the values $\tau>0$
for which the exponential map
\begin{equation}\label{gp}
{\mathcal G}_p: (t, \omega)\rightarrow \widetilde \Ga_\omega(s(t))
\end{equation}
is a global diffeomorphism from $(t(p)-\tau, t(p))\times {\Bbb S}^2$
to its image in $\widetilde\N^{-}(p)$. We remark that $s$ is a function not
only depending on $t$ but also on $\omega$. We suppress $\omega$
just for convenience. It is known that
$$
\tilde i_*(p,t)=\min\{\tilde s_*(p,t), \tilde \ell_*(p,t)\},
$$
where $\tilde s_*(p, t)$ is defined to be the supremum over all values $\tau>0$ such that the map ${\mathcal G}_p$
is a local diffeomorphism from $(t(p)-\tau, t(p))\times {\Bbb S}^2$ to its image, and $\tilde \ell_*(p,t)$ is
defined to be the smallest value of $\tau>0$ for which there exist two distinct null geodesics $\widetilde \Ga_{\omega_1}(s(t))$ and
$\widetilde \Ga_{\omega_2}(s(t))$ from $p$ which intersect at a point with $t=t_p-\tau.$

We can similarly define in $(\M, \bg)$  the past null radius of injectivity $i_*(p, t)$ with respect to the time foliation.

The following result gives the relation between the causal radius of injectivity in $(\M,\bg)$ and the null radius of
injectivity in $(\widetilde \M, \tilde \bg)$.

\begin{theorem}\label{8.7.2}
For a point $p$ in $(\M, \bg)$, let $c_*(p,t)$ denote the backward causal radius of injectivity at $p$ in $(\M, \bg)$,
which is defined by
$$
c_*(p, t):= \min(\delta_*, i_*(p,t)),
$$
where $\delta_*$ and $i_*(p,t)$ are the past timelike and null radius of injectivity at $p$ in $(\M, \bg)$.
Then there holds
\begin{equation}\label{8.7.1}
c_*(p,t)\le \tilde i_*(p,t).
\end{equation}
\end{theorem}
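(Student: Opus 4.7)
The plan is to prove $\tilde\ell_*(p,t) \ge c_*(p,t)$ and $\tilde s_*(p,t) \ge c_*(p,t)$ separately; both arguments leverage Lemma \ref{2.27.1.17} to transfer statements about null geodesics in $(\widetilde\M,\tilde\bg)$ to statements about geodesics in $(\M,\bg)$. Recall from the proof of Lemma \ref{lem17.1.7} that any null geodesic $\widetilde\Ga_\omega(s)$ in $(\widetilde\M,\tilde\bg)$ initiating at $p$ has the form $(\Upsilon_\omega(s), c_\omega s)$ with $c_\omega^2 = -\bg(V_\omega,V_\omega)$ and $V_\omega := \Upsilon_\omega'(0)$; hence $\Upsilon_\omega$ is null in $(\M,\bg)$ when $c_\omega=0$ and timelike otherwise. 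Since $\tilde\bg(\p_z,\bT_p)=0$, the normalization $\tilde\bg(\ell_\omega,\bT_p)=1$ becomes $\bg(V_\omega,\bT_p)=1$.

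For the bound on $\tilde\ell_*(p,t)$, I would suppose by contradiction that two distinct null geodesics $\widetilde\Ga_{\omega_1},\widetilde\Ga_{\omega_2}$ from $p$ meet at a point $q$ with $\tau:=t_p-t(q)<c_*(p,t)$. By Lemma \ref{2.27.1.17}, each geodesic lies entirely in one of $\N^-(p)$, $\H^+$, or $\H^-$. When the two components differ, the $z$-coordinates $c_{\omega_i}s_i$ at $q$ have incompatible signs (or one is zero while the other is not), forcing $q=p$ which is excluded. When both lie in $\N^-(p)$, the base curves $\Upsilon_{\omega_1},\Upsilon_{\omega_2}$ yield distinct null geodesics in $(\M,\bg)$ meeting within $\tau<i_*(p,t)$, contradicting the definition of $i_*(p,t)$. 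When both lie on the same sheet $\H^\pm$, the timelike base curves $\Upsilon_{\omega_1},\Upsilon_{\omega_2}$ reach a common point of $\Sigma_{t(q)}$ with $\tau<\delta_*$; the diffeomorphism property \eqref{12.29.4} of the timelike exponential map forces $V_{\omega_1}$ and $V_{\omega_2}$ to be positive scalar multiples of each other, the normalization $\bg(V_{\omega_i},\bT_p)=1$ then upgrades this to $V_{\omega_1}=V_{\omega_2}$, and the matching sign of $c_{\omega_i}$ yields $c_{\omega_1}=c_{\omega_2}$; thus $\ell_{\omega_1}=\ell_{\omega_2}$, contradicting $\omega_1\ne\omega_2$.

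For the bound on $\tilde s_*(p,t)$, I would exploit the product structure of $\tilde\bg$: from \eqref{chrisplit} the curvature tensor of $\tilde\bg$ vanishes whenever one of its arguments is $\p_z$ and otherwise agrees with the curvature of $\bg$ on $\M$-components. Consequently, any Jacobi field $J$ along $\widetilde\Ga_\omega$ decouples as $J=(J^\M,J^z\p_z)$, with $(J^z)''=0$ and $J^\M$ solving the Jacobi equation along $\Upsilon_\omega$ in $(\M,\bg)$. If $\mathcal G_p$ failed to be a local diffeomorphism at some $(t',\omega)$ with $\tau':=t_p-t'<c_*(p,t)$, then after subtracting the trivial tangential Jacobi fields of the form $(a+bs)\widetilde\Ga_\omega'$, we would obtain a nontrivial Jacobi field along $\widetilde\Ga_\omega$ vanishing at $s=0$ and $s=s(t')$. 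Since $J^z=bs$ must then vanish at both endpoints we get $J^z\equiv 0$, and $J^\M$ is a nontrivial Jacobi field along $\Upsilon_\omega$ in $(\M,\bg)$ vanishing at both endpoints. This produces a conjugate point of the null exponential map in $(\M,\bg)$ within $\tau'<i_*(p,t)$ when $c_\omega=0$, or of the timelike exponential map within $\tau'<\delta_*$ when $c_\omega\ne 0$; either case contradicts the definition of $c_*(p,t)$.

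The main obstacle is the casework for $\tilde\ell_*$: disentangling which sheet each null geodesic belongs to and, in the critical case where both belong to the same $\H^\pm$, combining the diffeomorphism property of the timelike exponential map with the affine normalization $\bg(V_\omega,\bT_p)=1$ to upgrade proportionality of $V_{\omega_1},V_{\omega_2}$ to equality. Combining the two bounds with $\tilde i_*(p,t)=\min(\tilde s_*(p,t),\tilde\ell_*(p,t))$ yields \eqref{8.7.1}.
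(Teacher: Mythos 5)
Your proposal is correct in substance and reaches the same conclusion, but the two halves relate differently to the paper's proof. For the cut-locus bound $\tilde\ell_*(p,t)\ge c_*(p,t)$ you run essentially the paper's argument (step (iv) there): case analysis via Lemma \ref{2.27.1.17} on which sheet each geodesic occupies, with the same-sheet case reduced to two distinct timelike geodesics meeting inside $\delta_*$, contradicting the injectivity of the exponential map \eqref{12.29.4}. Your extra step of using the normalization $\bg(V_{\omega_i},\bT_p)=1$ to upgrade proportionality of the initial vectors to equality, and hence $\ell_{\omega_1}=\ell_{\omega_2}$, is a worthwhile refinement of the paper's terser "impossible by the definition of $\d_*$". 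The real divergence is in the conjugate-point bound $\tilde s_*(p,t)\ge c_*(p,t)$: the paper does \emph{not} argue via Jacobi fields but instead invokes the focusing criterion of \cite{Choquet,HE} that a null conjugate point is signalled by $\emph{tr}\,\tilde\chi'\to-\infty$, and then reads the answer off the already-established identity \eqref{area_exp} in the form $V^0\,\emph{tr}\,\tilde\chi'=\emph{tr}\,k$, so that blow-up upstairs is equivalent to blow-up of $\emph{tr}\,k$ along the corresponding timelike geodesic downstairs. Your route -- decoupling the Jacobi equation under the product metric \eqref{g_1} into $(J^z)''=0$ plus the Jacobi equation for $J^{\M}$ in $(\M,\bg)$ -- is more elementary and avoids the connection-coefficient computations entirely, but it shifts the burden onto the standard (and, for null geodesics, somewhat delicate) equivalence between failure of ${\mathcal G}_p$ to be a local diffeomorphism and the existence of a nontrivial Jacobi field, normal to the generator and not proportional to $s\widetilde\Ga_\omega'$, vanishing at both endpoints; to make it airtight you should spell out that such a field is automatically $\tilde\bg$-orthogonal to $\widetilde\Ga_\omega'$ (so that the resulting $J^{\M}$ is detected by the exponential maps defining $\delta_*$ and $i_*(p,t)$), and note that the paper's argument only handles the $\H^\pm$ sheets because on $\N^-(p)$ the conclusion is immediate from the definition of $c_*(p,t)$ -- a shortcut you also use. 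Both approaches are valid; the paper's buys economy by reusing Lemma \ref{2.13.1}, yours buys independence from the curvature/expansion formalism.
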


\begin{proof}
Recall that $\widetilde \N^-(p)$ is composed by the three parts: $\H^+$, $\H^-$ and $\N^-(p)$, and,
according to Lemma \ref{2.27.1.17}, every $\widetilde \Ga_\omega(s(\tau))$ lies  completely in $\H^+$, $\H^-$ or $\N^-(p)$.
Moreover, the proof of Lemma \ref{lem17.1.7} shows that the two sets $\H^\pm$ are  ruled  by the family of curves
\begin{equation}\label{8.11.1}
(\Upsilon_V(\rho(\tau)), \pm\rho(\tau)), \quad V\in {\mathbb H}^2,
\end{equation}
where $\Upsilon_V$ denotes the time-like geodesic in $(\M, \bg)$ with $\Upsilon_V(0)=p$ and $\Upsilon'(0)=V$.
Here the sign of the $z$ coordinate is determined by the fact whether the curves lie in $\H^+$ or $\H^-$.

(i) We first show that for any null geodesic $\widetilde \Gamma_\omega(s)$ lying in $\H^\pm$ there is a unique $V\in {\mathbb H}^2$
such that
\begin{align}\label{eq17.3.12}
\widetilde \Ga_\omega(s(\tau))= \widetilde \Upsilon_V(\tau):= (\Upsilon_V(\rho(\tau)), \pm\rho(\tau))
\end{align}
for $\tau < \min\{\tilde i_*(p, t), c_*(p, t)\}$.

Since the spacetime $(\widetilde\M, \tilde \bg)$ is symmetric about the $z$ coordinate, it suffices to consider
the case that $\widetilde \Ga_\omega$ lies in $\H^+$. We can write
\begin{equation*}
\widetilde\Ga_\omega(s(\tau)):=(\Ga_\omega (s(\tau)), z(s(\tau))).
\end{equation*}
According to Lemma \ref{lem17.1.7}, $z(s(\tau)) = \rho(\tau)$ is the Lorentzian distance of
$\Ga_\omega(s(\tau))$ to $p$ in $(\M, \bg)$ and $\Ga_\omega$ is a timelike geodesic in $(\M, \bg)$ initiating from $p$.
Thus, we can find a $V := (V^0, V^1, V^2) \in {\mathbb H}^2$ such that the geodesic $\Ga_\omega$ can be represented by
$\Upsilon_V$ if parametrized by $\rho$, i.e. $\Ga_\omega(s(\tau)) = \Upsilon_V(\rho(\tau))$. This
shows (\ref{eq17.3.12}). The choice of $V$ is unique, since otherwise there would exist two distance
maximizing time-like geodesics forming a loop at certain $\tau<\delta_*$ in $(\M, \bg)$, which
is impossible by the definition of $\d_*$.

(ii) We now derive the relation between the null geodesic generator of $\widetilde \Ga_\omega(s)$
and the geodesic generator of $\widetilde \Upsilon_V(\rho)$. By using (\ref{eq_2}), (\ref{8.6.1}) and
(\ref{barlb}) we have
\begin{equation*} 
\frac{d}{d\tau} \widetilde\Upsilon_V(\tau) = \frac{d}{d\rho} \widetilde\Upsilon_V\c \frac{d\rho}{d\tau}
=\frac{n\rho}{\bb^{-1}\tau}(\fB+\p_z)\left|_{\widetilde\Upsilon_V(\tau)}\right.
= n \tilde L\left|_{\widetilde\Upsilon_V(\tau)}\right.
= n\tilde L\left|_{\widetilde\Ga_\omega (s(\tau))}\right..
\end{equation*}
On the other hand, since $\tilde L'(s)=1$ implies $\frac{d s}{d \tau} = \aaa n$ along $\widetilde \Ga_\omega(s(\tau))$
with $\aaa^{-1} = \l \tilde L', \bT\r$, we may use $\ti \Upsilon_V(\tau)=\widetilde\Ga_\omega (s(\tau))$ to derive that
$$
\frac{d}{d\tau} \widetilde\Upsilon_V(\tau) = \frac{d}{d \tau} \widetilde \Ga_\omega(s(\tau))
= \frac{d}{d s} \widetilde\Ga_\omega(s(\tau))\c \frac{d s}{d \tau}
= \aaa n \tilde L'\left|_{\widetilde\Ga_\omega(s(\tau))}\right.
$$
Combining the above two equations we can obtain $\tilde L' = \aaa^{-1} \tilde L$.  Using $\l \ell_\omega, \bT_p\r=1$
we can see that $\aaa \rightarrow 1$ as $q$ approaches $p$ along $\widetilde \Ga_\omega$.

By the definition of $V^0$ and (\ref{eq_1}), along the timelike geodesic $\Upsilon_V$ there holds
\begin{equation*}
\lim_{t\rightarrow t_p}\frac{\rho}{\bb^{-1}(t_p-t)}= \frac{1}{V^0}.
\end{equation*}
This together with $\widetilde \bd_{\tilde L'} \tilde L' =0$, $\l \ell_\omega, \bT_p\r =1$ and (\ref{eq_1}) imply that
\begin{equation}\label{8.6.3}
\tilde L' = \frac{1}{V^0} (\fB+\p_z), \quad \aaa= \frac{\rho}{\bb^{-1}\tau} V^0.
\end{equation}

(iii) We now show that if $\tilde \ell_*(p,t)\ge \tilde s_*(p,t)$ then $\tilde s_*(p, t) \ge c_*(p, t)$.
To this end, consider an arbitrary null geodesic $\widetilde \Ga_\omega$. If $\widetilde \Ga_\omega(s(\tau))$
is contained in $\N^-(p)$, by the definition of $c_*(p, t)$, $\widetilde \Ga_\omega(s(\tau))$ does not
contain any null conjugate point on $(0, c_*(p, t))$. So it needs only to consider the case that $\widetilde \Ga_\omega$
is contained in $\H^+$ or $\H^-$. By symmetry, it suffices to consider the case that $\widetilde \Ga_\omega$ is
contained in $\H^+$. According to \cite{Choquet,HE}, it suffices to show that $\tr \tilde \chi'>-\infty$
for $\tau:=t_p-t<c_*(p, t)$, where $\tilde \chi'$ denotes the null second fundamental form defined by (\ref{nf_1})
with $\tilde L$ replaced by $\tilde L'$ and $\tr \tilde \chi'$ denotes its trace. By using (\ref{barlb}) and (\ref{8.6.3}),
it is straightforward to obtain
\begin{equation*}
\tr\tilde\chi = \frac{\rho}{\bb^{-1}\tau} V^0 \tr\tilde\chi'.
\end{equation*}
This together with (\ref{area_exp}) shows that
\begin{equation}\label{8.7.5}
V^0 \tr \tilde \chi' = \tr k.
\end{equation}
Note that the timelike geodesic $\Upsilon_V$ in $(\M, \bg)$ from $p$ reaches a conjugate point $q$ iff $\tr k\rightarrow -\infty$
as points approach $q$ along this geodesic. Since $\tilde \ell_*(p, t)\ge \tilde s_*(p, t)$, we can conclude from (\ref{8.7.5}) that,
$\tr \tilde \chi'$ does not diverge to $-\infty$ along $\widetilde \Ga_\omega(s(\tau))$ iff $\tr k$ does not along
$\Upsilon_V(\rho(\tau))$. Since $\tr k >-\infty$ along $\Upsilon_V(\rho(\tau))$ on $(0, c_*(p, t))$, we must have
$\tr \tilde\chi'>-\infty$ along $\widetilde \Ga_\omega(s(\tau))$ on $(0, c_*(p, t))$. Therefore $\tilde s_*(p, t) \ge c_*(p, t)$.

(iv) Finally we show that $\tilde i_*(p, t) \ge c_*(p, t)$. Suppose this is not true, i.e. $\tilde i_*(p, t) < c_*(p, t)$,
we will derive a contradiction. By using the claim in (iii), we must have $\tilde \ell_*(p,t)< \tilde s_*(p,t)$.
Thus there exist two distinct null geodesics $\widetilde \Ga_{\omega_1}(s(\tau))$ and $\widetilde \Ga_{\omega_2}(s(\tau))$ intersecting at
some point $q$ with  $\tau= \tilde \ell_*(p,t)$. If $q \in \N^-(p)$, then Lemma \ref{2.27.1.17} implies that
$\widetilde \Ga_{\omega_1}(s(\tau))$ and $\widetilde \Ga_{\omega_2}(s(\tau))$ are both contained in $\N^-(p)$ and intersect at $q$;
this can not happen since $\tau < c_*(p, t)$. We may assume $q \in \H^+ \cup \H^-$. By symmetry we only need to consider the case that
$q \in \H^+$. Now by Lemma \ref{2.27.1.17} both $\widetilde \Ga_{\omega_1}(s(\tau))$ and $\widetilde \Ga_{\omega_2}(s(\tau))$
are contained in $\H^+$.  According to (i), we can find two distinct vectors $V_1, V_2 \in {\mathbb H}^2$ such that
\begin{equation*}
\widetilde \Ga_{\omega_i}(s(\tau))=\widetilde \Upsilon_{V_i}(\tau):= (\Upsilon_{V_i}(\rho(\tau)), \rho(\tau)), \quad i =1, 2
\end{equation*}
for $\tau< \tilde \ell_*(p,t)$. Thus there exists a point $q'\in \I^-(p)$ such that $\Upsilon_{V_1}(\rho(\tau))
=\Upsilon_{V_2}(\rho(\tau)) = q'$ when $\tau= \tilde \ell_*(p,t)<c_*(p,t)$. This is impossible by the definition of $c_*(p, t)$
and in particular the definition of $\d_*$. Therefore $\tilde i_*(p,t)\ge c_*(p,t)$ and the proof is complete.
\end{proof}

According to Theorem \ref{8.7.2}, for any $\tau_0<c_*(p, t)$, $\widetilde \N^-(p)\cap \{t_p-t<\tau_0\}$ is a regular part of
$\widetilde \N^-(p)$ on which the Kirchhoff formula in \cite{KSob,Shao,Wang10} holds. By adapting the version of the Kirchhoff
formula in \cite{Shao,Wang10} with the null frame $\{\tilde L, \tilde \Lb, e_C, C=1,2\}$, where $\{e_C\}_{C=1,2}$ is an
orthonormal frame on $S_t$, we have the following result which will be used to proved Theorem \ref{7.13.2.16}.

\begin{proposition}\label{7.15.8}
Let $p$ be any point in $\M$, let $t_0$ verify $0<\tau_0 := t_p-t_0<c_*(p, t)$, and set $\H_* := \widetilde \N^-(p)\cap \{t_p-t\le \tau_0\}$.
Let $\phi_I$ be an $\M$-tangent tensor on $(\widetilde \M, \tilde \bg)$. For any $\M$-tangent tensor $J_I$ at $p$ of
the same type as $\phi_I$, let $A^I$ be an $\M$-tangent tensor field satisfying
\begin{equation}\label{tsa}
\widetilde \bd_{\tilde L} A_I + \left(\frac{1}{2} \tr \tilde\chi +\frac{n^{-1}-\bb}{\tau}\right) A_I=0,
\quad \lim_{t\rightarrow t_p} \tau A_I=J_I.
\end{equation}
along $\H_*$. Then there holds
the Kirchhoff formula
\begin{align}\label{2.14.2}
4\pi (n \l \phi, J\r)(p)
&= -\int_{S_{t_0}} \left(\widetilde{\bf D}_{\tilde\Lb} \phi_I + \f12 \emph{\tr} \tilde\chib \phi_I\right) A^I d{\mu_{S_{t_0}}} \nn\\
& \quad \, -\int_{\H_*} \Box_{\tilde\bg} \phi_I A^I \bb d\mu + \Er[\phi]
\end{align}
with
\begin{align}
\Er[\phi] &= \int_{\H_*} \left(2\tilde\zb^C \widetilde{\bf D}_C \phi_I+\widetilde \sD \phi_I \right) A^I\bb d\mu\nn\\
& \quad \, +\f12\int_{\H_*}\left(-(\widetilde{\bf R}*\phi)_I
+\left(\underline{\mu}-\omega \emph{\tr}\tilde \chib\right)\phi_I \right) A^I \bb d\mu, \label{2.14.1}
\end{align}
where $d\mu=n d\mu_{S_t} dt$ on $\H_*$, $\underline\mu$ and $\tilde\zb$ are defined in (\ref{7.15.1}),
and for $I = \{\mu_1, \cdots, \mu_l\}$,
\begin{align}\label{2017.1.1.1}
(\widetilde {\bf R} * \phi)_I = \sum_{i=1}^l \tensor{\widetilde {\bf R}}{_{\mu_i}^\a _{\tilde \Lb\tilde L}}
\tensor{\phi}{_{\mu_1\cdots\mu_{i-1} \a \mu_{i+1}\cdots \mu_l}}.
\end{align}
\end{proposition}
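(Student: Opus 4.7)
The plan is to derive (\ref{2.14.2}) by adapting the construction of the Kirchhoff formula in \cite{KSob,Shao,Wang10} to the lifted spacetime $(\widetilde\M,\tilde\bg)$. By Theorem \ref{8.7.2} the null cone $\widetilde\N^-(p)$ is regular on $\H_*$, so the transport equation (\ref{tsa}) uniquely determines $A_I$ along each null generator $\widetilde\Ga_\omega$ starting from the datum at $p$.

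First, I would decompose $\Box_{\tilde\bg}\phi_I$ in the null frame $\{\tilde L,\tilde\Lb,e_1,e_2\}$. Using the frame identity $\tilde\bg^{\tilde\a\tilde\b} = -\tfrac{1}{2}(\tilde L^{\tilde\a}\tilde\Lb^{\tilde\b}+\tilde\Lb^{\tilde\a}\tilde L^{\tilde\b}) + \sum_C e_C^{\tilde\a}e_C^{\tilde\b}$ and expanding with the connection coefficients of the null frame yields a schematic identity
$$\Box_{\tilde\bg}\phi_I = -\widetilde\bd_{\tilde L}\widetilde\bd_{\tilde\Lb}\phi_I + \widetilde\sD\phi_I - \tfrac{1}{2}\tr\tilde\chi\,\widetilde\bd_{\tilde\Lb}\phi_I - \tfrac{1}{2}\tr\tilde\chib\,\widetilde\bd_{\tilde L}\phi_I + 2\tilde\zb^C\widetilde\bd_C\phi_I - \tfrac{1}{2}(\widetilde\bR*\phi)_I + (\text{terms in }\tilde\omega),$$
where the curvature term arises from $[\widetilde\bd_{\tilde L},\widetilde\bd_{\tilde\Lb}]$ acting on the tensor $\phi_I$.

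Next I would multiply by $\bb A^I$, integrate over $\H_*$ against $d\mu = n\,d\mu_{S_t}dt$, and integrate by parts along $\tilde L$. The identity $\bb A^I\widetilde\bd_{\tilde L}\widetilde\bd_{\tilde\Lb}\phi_I = \widetilde\bd_{\tilde L}(\bb A^I\widetilde\bd_{\tilde\Lb}\phi_I) - \widetilde\bd_{\tilde L}(\bb A^I)\widetilde\bd_{\tilde\Lb}\phi_I$, combined with (\ref{llogb}) and the transport equation (\ref{tsa}), shows that $\widetilde\bd_{\tilde L}(\bb A^I) + (\tfrac{1}{2}\tr\tilde\chi+\omega)\bb A^I = 0$; this is precisely the cancellation needed to eliminate the $\widetilde\bd_{\tilde\Lb}\phi_I$ volume term, leaving only boundary contributions at the vertex $p$ and at the bottom slice $S_{t_0}$. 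A parallel integration by parts applied to the $\tfrac{1}{2}\tr\tilde\chib\widetilde\bd_{\tilde L}\phi_I$ term, using the definition of $\underline\mu$ in (\ref{7.15.1}), generates the $\underline\mu$ contribution that eventually appears in $\Er[\phi]$, while producing the boundary term $\tfrac{1}{2}\tr\tilde\chib\phi_I A^I$ on $S_{t_0}$.

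The main obstacle is the vertex limit. As $\tau\to 0$ along $\widetilde\Ga_\omega(s(\tau))$, one has $\tau A_I\to J_I$, the area element satisfies $d\mu_{S_t}\sim s^2 d\sigma_{{\mathbb S}^2}$ with $s$ the affine parameter, and $\widetilde\bd_{\tilde\Lb}\phi_I\to -2\widetilde\bd_{\bT}\phi_I(p)$ after reparametrization from $s$ to $\tau$. Combining these asymptotics with the normalization $\tilde\bg(\ell_\omega,\bT_p)=1$ and $\int_{{\mathbb S}^2}d\sigma = 4\pi$, the vertex boundary term integrates to $4\pi(n\langle\phi,J\rangle)(p)$. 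The remaining volume integrands, namely $\widetilde\sD\phi_I$, $2\tilde\zb^C\widetilde\bd_C\phi_I$, the curvature $(\widetilde\bR*\phi)_I$, and $\underline\mu-\omega\tr\tilde\chib$, assemble exactly into $\Er[\phi]$ as displayed in (\ref{2.14.1}).
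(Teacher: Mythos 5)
Your overall strategy is the same as the paper's appendix proof: decompose $\Box_{\tilde\bg}\phi_I$ in the null frame, multiply by $\bb A^I$, integrate by parts along $\tilde L$ using (\ref{llogb}) together with (\ref{tsa}) (your identity $\widetilde\bd_{\tilde L}(\bb A_I)+(\tfrac12\tr\tilde\chi+\omega)\bb A_I=0$ is correct and is exactly the cancellation the paper exploits, which it organizes instead through the renormalization $\psi_I=v^{1/2}\phi_I$ with $\tilde Lv=\tr\tilde\chi\,v$, $\tilde\Lb v=\tr\tilde\chib\,v$). That difference is purely one of bookkeeping.

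There is, however, a genuine error in your treatment of the vertex limit, which is the crux of the formula. You attribute the $4\pi(n\l\phi,J\r)(p)$ to the limit of $\widetilde\bd_{\tilde\Lb}\phi_I$ along the generators. That term contributes nothing: near the vertex $A^I\sim J^I/\tau$ while $d\mu_{S_t}\sim 4\pi n^2\tau^2\,d\sigma_{{\mathbb S}^2}$, so $\int_{S_t}\widetilde\bd_{\tilde\Lb}\phi_I\,A^I\,d\mu_{S_t}=O(\tau)\to 0$; moreover $\widetilde\bd_{\tilde\Lb}\phi_I$ would in any case produce a first-derivative of $\phi$ at $p$, which does not appear in (\ref{2.14.2}). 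The actual vertex contribution comes from the boundary term $\tfrac12\tr\tilde\chib\,\phi_IA^I$ generated by the \emph{second} integration by parts (the one you apply to $\tfrac12\tr\tilde\chib\,\widetilde\bd_{\tilde L}\phi_I$) — a term you record only on the bottom slice $S_{t_0}$ and omit at the vertex. It is singular like $\tau^{-1}$ because of the asymptotics $\tau\tr\tilde\chib\to -2/n(p)$ (equivalently $\rho\,\tr k\to 2$, $\bb\to n(p)^{-1}$ via (\ref{2.14.4})), and pairing this with $\tau A_I\to J_I$ and $|S_t|/(4\pi n^2\tau^2)\to 1$ is what produces $-4\pi(n\l\phi,J\r)(p)$, as in (\ref{8.3.1}). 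Without this ingredient your argument does not close: the leading boundary term at the vertex would be dropped and the term you keep tends to zero.
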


 In Section \ref{7.15.7}, we will give an alternative proof for the Kirchhoff formula in  Proposition \ref{7.15.8}.  In the sequel we will prove
Theorem \ref{7.13.2.16} by using Proposition \ref{7.15.8}.

\subsection{Proof of Theorem \ref{7.13.2.16}}

We first give the following result which represents $\tr\tilde\chib$, $\tilde\zb$, $\widetilde \bR$ and
$\underline{\mu}-\omega \tr \tilde \chib$ in terms of the quantities in $(\M, \bg)$.

\begin{lemma}\label{7.18.2}
\begin{align}
\tilde\zb^\b & = -\frac{\tir}{\bb^{-1}\tau} \zb^\sl{A}e_{\sl{A}}^\b
   - \frac{\rho^2}{\bb^{-1}\tau \tir} \omega \bN^\b, \label{torsion}\\
\emph{\tr}\tilde{\chib} & =-\frac{\rho}{\bb^{-1} \tau} \emph{\tr} k+2\left(\emph{\Tr} \pmb{\pi}
-\frac{\tir^2}{\bb^{-2}\tau^2} \pmb{\pi}_{\bN\bN}\right),\label{2.14.4} \\
\underline{\mu}-\omega \emph{\tr} \tilde\chib
& = 2\frac{\rho}{\bb^{-1}\tau}\omega \emph{\tr} k +\frac{\rho^2}{\bb^{-2}\tau^2}(\bR_{\fB\fB}+|\hk|^2) \nonumber\\
& \quad \, +2 \frac{\rho}{\bb^{-1}\tau} \left(\fB+\frac{\emph{\tr} k}{2}-\frac{\bb^{-1}\tau\omega}{\rho}\right)
\left(\emph{\Tr} \pmb{\pi}-\frac{\tir^2}{\bb^{-2}\tau^2} \pmb{\pi}_{\bN\bN}\right). \label{7.15.2} \\
\f12 \tensor{\widetilde {\bf R}}{_\b^\a _{\tilde L} _{\tilde\Lb}}
& =\frac{\tir}{\bb^{-1}\tau}\tensor{{\bf R}}{_\b^\a _\bT _\bN}.
\label{7.18.1}
\end{align}
\end{lemma}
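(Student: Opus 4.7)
The plan is to prove each of the four identities by repeatedly exploiting the decompositions $\tilde L=\frac{\rho}{\bb^{-1}\tau}(\fB+\p_z)$, $\tilde\Lb=\frac{\rho}{\bb^{-1}\tau}(\fBb-\p_z)$ from (\ref{barlb}) together with the structural facts (\ref{chrisplit}) and (\ref{7.10.16.4}), which make $\p_z$ essentially parallel and orthogonal to all $\M$-tangent fields. The first identity (\ref{2.14.4}) is immediate: add and subtract (\ref{area_exp}) from (\ref{2.13.2}) and substitute (\ref{chib}), noting $\bb^2\tir^2/\tau^2=\tir^2/(\bb^{-2}\tau^2)$. For (\ref{7.18.1}), I would first observe that $\widetilde {\bf R}(\cdot,\cdot,\cdot,\cdot)$ vanishes whenever any slot is $\p_z$ (the extra Christoffel components vanish by (\ref{chrisplit}) and $\p_z$ is parallel by (\ref{7.10.16.4})), so the curvature on $\M$-tangent arguments coincides with $\bR$. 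Then $\tensor{\widetilde{\bf R}}{_\b^\a_{\tilde L}_{\tilde\Lb}}=\frac{\rho^2}{\bb^{-2}\tau^2}\tensor{\bR}{_\b^\a_\fB_\fBb}$, and after expanding $\fB$ and $\fBb$ via (\ref{fb1})--(\ref{fb2}), the $\bT\bT$ and $\bN\bN$ pieces drop out by antisymmetry of $\bR$ in its last two slots, leaving $2\frac{\bb^{-1}\tau\tir}{\rho^2}\tensor{\bR}{_\b^\a_\bT_\bN}$, which gives (\ref{7.18.1}).

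For the torsion identity (\ref{torsion}), I would evaluate $\tilde\zb$ on an orthonormal frame of $\T S_t$ consisting of $e_{\sl A}$ (tangent to $S_{\tau,\rho}\subset\M$) and a second unit vector $e_\star=\tilde a(\bN-a^{-1}\p_z)$ orthogonal to $\tilde N$ and $\bT$. Applying $\widetilde{\bf D}_{\tilde L}$ to $\tilde\Lb$ and using (\ref{7.10.16.4}) to kill $\widetilde{\bf D}_{\tilde L}\p_z$ and reduce $\widetilde{\bf D}_{\tilde L}\fBb$ to $\frac{\rho}{\bb^{-1}\tau}{\bf D}_\fB\fBb$, we get
\begin{equation*}
\widetilde{\bf D}_{\tilde L}\tilde\Lb=\tilde L\!\left(\tfrac{\rho}{\bb^{-1}\tau}\right)(\fBb-\p_z)+\tfrac{\rho^2}{\bb^{-2}\tau^2}{\bf D}_\fB\fBb.
\end{equation*}
The first term is parallel to $\tilde\Lb$, hence drops when paired with $e_{\sl A}$ or $e_\star$. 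For the second, write $\fBb=-2\frac{\bb^{-1}\tau}{\rho}\bT-\fB$ and use ${\bf D}_\fB\fB=0$ together with $\fB(\bb^{-1}\tau/\rho)=(\bb^{-2}\tau^2/\rho^2)\omega$ (obtained by differentiating $\l\fB,\bT\r$ as in the derivation of (\ref{7.15.6})). Pairing with $e_{\sl A}$ and using (\ref{7.10.16.3}) produces the $\zb^{\sl A}$ term; pairing with $e_\star$ and using (\ref{7.15.6}) produces the $\omega$ coefficient. The $\p_z$-component of $\tilde\zb$ is irrelevant because in the Kirchhoff formula $\tilde\zb^C\widetilde{\bf D}_C\phi_I$ only the $\M$-tangent part matters (since $\phi_I$ is $z$-independent), and the surviving $\M$-part is exactly the right-hand side of (\ref{torsion}).

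The main obstacle is (\ref{7.15.2}). Starting from the definition $\underline\mu=\tilde L\,\tr\tilde\chib+\tfrac12\tr\tilde\chi\,\tr\tilde\chib$, I substitute (\ref{area_exp}) and (\ref{2.14.4}) and use the fact that $\tilde L f=\frac{\rho}{\bb^{-1}\tau}\fB f$ on any $\M$-tangent scalar $f$ (since $\p_z f=0$), together with $\tilde L(\rho/(\bb^{-1}\tau))=-\frac{\rho\omega}{\bb^{-1}\tau}$ coming from the very definition of $\omega$. This produces a term $-\frac{\rho^2}{\bb^{-2}\tau^2}\fB\,\tr k$ which I eliminate via the Raychaudhuri equation for the timelike hypersurface-orthogonal geodesic congruence generated by $\fB$ in the $(2+1)$-dimensional $(\M,\bg)$, namely
\begin{equation*}
\fB\,\tr k=-\tfrac12(\tr k)^2-|\hk|^2-\bR_{\fB\fB}.
\end{equation*}
After this substitution the $\tfrac12(\tr k)^2$ contributions cancel against the $\tfrac12\tr\tilde\chi\tr\tilde\chib$ piece, and subtracting $\omega\tr\tilde\chib=-\frac{\rho\omega\tr k}{\bb^{-1}\tau}+2\omega\Phi$ (with $\Phi:=\Tr\pmb\pi-\tfrac{\tir^2}{\bb^{-2}\tau^2}\pmb\pi_{\bN\bN}$) doubles the $\omega\tr k$ term and reorganizes the remaining $\Phi$-contributions into $2\frac{\rho}{\bb^{-1}\tau}(\fB+\tfrac{\tr k}{2}-\tfrac{\bb^{-1}\tau\omega}{\rho})\Phi$, which is exactly the claimed form. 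The difficulty is purely bookkeeping: one must track several coefficient conversions between $\rho/\bb^{-1}\tau$ and its inverse and handle the $\omega$ weights consistently, but no new geometric input beyond Raychaudhuri and the previously derived Lemma is needed.
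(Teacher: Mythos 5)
Your proposal is correct and follows essentially the same route as the paper's proof: (\ref{2.14.4}) is read off from Lemma \ref{2.13.1}, (\ref{torsion}) comes from projecting $\widetilde{\bf D}_{\tilde L}\tilde\Lb$ onto $\T S_t$ and reducing to $\l {\bf D}_\fB\fBb, e_\sl{A}\r$ and $\l {\bf D}_\fB\fBb, \bN\r$, (\ref{7.15.2}) follows by substituting (\ref{area_exp}) and (\ref{2.14.4}) into the definition of $\underline\mu$ and invoking the Raychaudhuri equation (\ref{7.15.4}), and (\ref{7.18.1}) follows from $\tensor{\widetilde\bR}{_\b^\a_{z\ga}}=0$ together with antisymmetry in the last two slots. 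Your only deviations are cosmetic: you pair with the explicit orthonormal frame $\{e_\sl{A}, e_\star\}$ rather than with the projection (\ref{7.13.1.16}), and your shortcut $\fBb=-2\frac{\bb^{-1}\tau}{\rho}\bT-\fB$ reaches the key intermediate identities (\ref{7.11.3.16.1}) and (\ref{7.12.11.16}) somewhat faster than the paper's detour through (\ref{8.3.5}) and (\ref{8.3.3}).
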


\begin{proof}
We first derive (\ref{torsion}). Note that $\tilde \zb^\beta = \f12 (\widetilde\bd_{\tilde L} \tilde \Lb)^{\tilde \a} \Pi_{\tilde \a}^\b$.
By using (\ref{barlb}) and $\tilde \Lb^{\tilde \a} \Pi_{\tilde \a}^\b=0$ we have
\begin{align*}
\tilde \zb^\beta = \f12\frac{\rho}{\bb^{-1}\tau} \widetilde \bd_{\tilde L} (\fBb - \p_z)^{\tilde \a} \Pi_{\tilde \a}^\b
= \f12\frac{\rho^2}{\bb^{-2} \tau^2} \widetilde\bd_{\fB+\p_z} (\fBb-\p_z)^{\tilde \a} \Pi_{\tilde \a}^\b.
\end{align*}
In view of (\ref{chrisplit}), (\ref{7.10.16.4}) and the fact that $\fBb$ is independent of $z$, we then obtain
$$
\tilde \zb^\b = \f12\frac{\rho^2}{\bb^{-2}\tau^2} \widetilde\bd_\fB \fBb^{\tilde\a} \Pi_{\tilde\a}^\b
= \f12\frac{\rho^2}{\bb^{-2}\tau^2} \bd_\fB \fBb^\a \sl{\Pi}_\a^\b
$$
where we also used  $\Pi_\a^\b = \slashed{\Pi}_\a^\b$ due to (\ref{3.16.1.17}).
In view of the above identity  and (\ref{7.13.1.16}) we can see that
\begin{equation}\label{7.12.10.16}
\tilde\zb^\b =\f12 \frac{\rho^2}{\bb^{-2}\tau^2}\left(\l \bd_\fB \fBb, e_\sl{A}\r e_\sl{A}^\b
+\frac{\rho^2}{\bb^{-2}\tau^2}\l\bd_\fB \fBb, \bN\r \bN^\b\right).
\end{equation}
By using (\ref{fb2}) we have
\begin{align}\label{7.11.4.16}
\bd_\fB \fBb &= - \bd_\fB \left(\frac{\bb^{-1}\tau}{\rho}\bT+\frac{\tir}{\rho}\bN \right)\nn\\
& = -\fB \left(\frac{\bb^{-1}\tau}{\rho}\right)\bT - \fB\left(\frac{\tir}{\rho}\right) \bN
 - \frac{\bb^{-1}\tau}{\rho} \bd_\fB \bT - \frac{\tir}{\rho} \bd_\fB \bN .
\end{align}
Since $\l \bT, e_\sl{A}\r = \l \bN, e_\sl{A}\r =0$, it follows that
\begin{equation}\label{7.11.3.16}
\l\bd_\fB \fBb, e_\sl{A}\r = -\frac{\bb^{-1}\tau}{\rho}\l \bd_\fB \bT, e_\sl{A}\r
- \frac{\tir}{\rho}\l \bd_\fB \bN, e_\sl{A}\r.
\end{equation}
From  (\ref{7.10.16.3}) we have $\l \bd_\fB \bT, e_\sl{A}\r = \frac{\tir}{\rho} \zb_\sl{A}$. Recall from \cite[page 17]{Wang16}
that $\sn_\sl{A} \log a=-\l\bd_\bN \bN, e_\sl{A}\r$. This together with (\ref{fb1}) and (\ref{8.3.5}) gives
\begin{align*}
\l \bd_\fB \bN, e_\sl{A}\r &= - \frac{\bb^{-1}\tau}{\rho} \l \bd_\bT \bN, e_\sl{A}\r + \frac{\tir}{\rho} \l \bd_\bN \bN, e_\sl{A}\r \\
& = -\frac{\bb^{-1}\tau}{\rho} \left(k_{\Nb \sl{A}} + \frac{\bb^{-1}\tau}{\tir} \l \bd_\bT\bT, e_\sl{A}\r\right) - \frac{\tir}{\rho}\sn_\sl{A}\log a.
\end{align*}
By using  (\ref{8.3.3}) and (\ref{7.10.16.3}), we then obtain
\begin{align*}
\l \bd_\fB \bN, e_\sl{A}\r
&= -\frac{\bb^{-1}\tau}{\rho}\left(k_{\Nb \sl{A}} + \frac{\bb^{-1}\tau}{\tir} \l \bd_\bT\bT, e_\sl{A}\r\right)
-\frac{\tir}{\rho} \frac{\bb^{-1} \tau}{\tir} \left(\pi_{\bN \sl{A}} - k_{\Nb \sl{A}}\right) \\
& = \frac{\bb^{-1}\tau}{\rho}\left(- \frac{\bb^{-1}\tau}{\tir} \l \bd_\bT\bT, e_\sl{A}\r + \l \bd_\bN \bT, e_\sl{A}\r \right) \\
& = \frac{\bb^{-1}\tau}{\tir} \l \bd_\fB \bT, e_\sl{A}\r = \frac{\bb^{-1} \tau}{\rho} \zb_\sl{A}.
\end{align*}
Combining the above results with (\ref{7.11.3.16}), we obtain
\begin{align}\label{7.11.3.16.1}
\l\bd_\fB \fBb, e_\sl{A}\r = - 2\frac{\bb^{-1} \tau \tir}{\rho^2} \zb_\sl{A}.
\end{align}

Next we calculate $\l \bd_\fB \fBb, \bN\r$. It follows from (\ref{7.11.4.16}) that
\begin{equation}\label{7.12.9.16}
\l\bd_\fB \fBb, \bN\r = - \fB\left(\frac{\tir}{\rho}\right) - \frac{\bb^{-1} \tau}{\rho} \l \bd_\fB \bT, \bN\r.
\end{equation}
We have $\fB(\tir/\rho) = \fB(\tir)/\rho - \tir/\rho^2$. From $\tir^2 = \bb^{-2} \tau^2 - \rho^2$ it follows that
$2 \tir \fB(\tir) = 2 \bb^{-1} \tau \fB(\bb^{-1} \tau) - 2\rho$. Thus
$$
\fB(\tir) = \frac{\bb^{-1} \tau}{\tir} \fB(\bb^{-1} \tau) - \frac{\rho}{\tir}.
$$
Consequently
$$
\fB\left(\frac{\tir}{\rho}\right) = \frac{\bb^{-1}\tau}{\rho \tir} \fB(\bb^{-1}\tau) - \frac{1}{\tir} -\frac{\tir}{\rho^2}
= \frac{\bb^{-1} \tau}{\rho \tir} \fB(\bb^{-1} \tau) - \frac{\bb^{-2} \tau^2}{\rho^2 \tir}.
$$
In view of (\ref{eq_2}), we have $\fB(\tau) = \frac{n^{-1}\bb^{-1} \tau}{\rho}$.
This together with (\ref{7.12.8.16}) shows that
$$
\fB(\bb^{-1} \tau) = \bb^{-1} \fB(\tau) + \tau \fB(\bb^{-1}) = \frac{\bb^{-1}\tau}{\rho} + \frac{\bb^{-2}\tau^2}{\rho} \omega.
$$
Therefore
$$
\fB\left(\frac{\tir}{\rho}\right) = \frac{\bb^{-3}\tau^3}{\rho^2 \tir} \omega.
$$
Combining this with (\ref{7.12.9.16}) and using (\ref{7.15.3}) and (\ref{7.15.6}) we obtain
\begin{equation}\label{7.12.11.16}
\l\bd_\fB \fBb, \bN\r = - \frac{\bb^{-3}\tau^3}{\rho^2 \tir} \omega - \frac{\bb^{-1} \tau}{\rho} \l \bd_\fB \bT, \bN\r
= - 2 \frac{\bb^{-3}\tau^3}{\rho^2 \tir} \omega.
\end{equation}
The combination of (\ref{7.12.10.16}), (\ref{7.11.3.16.1})  and (\ref{7.12.11.16}) then shows (\ref{torsion}).

The equation (\ref{2.14.4}) follows directly from (\ref{area_exp}), (\ref{chib}) and (\ref{2.13.2}) in Lemma \ref{2.13.1}.

Next we derive (\ref{7.15.2}). Note that $\tr \tilde\chib$ is independent of $z$. It follows from (\ref{barlb}),
(\ref{area_exp}), the definition of $\underline{\mu}$ from (\ref{7.15.1}) and (\ref{2.14.4}),
\begin{align}\label{7.15.5}
\underline{\mu} &= \frac{\rho}{\bb^{-1}\tau} \left(\fB+ \frac{1}{2} \tr k\right) \tr \tilde \chib \nn\\
& = \frac{\rho}{\bb^{-1}\tau} \left(\fB+\f12\tr k\right)
\left(-\frac{\rho}{\bb^{-1}\tau} \tr k +2\left(\Tr \pmb{\pi}-\frac{\tir^2}{\bb^{-2}\tau^2}\pmb{\pi}_{\bN\bN}\right)\right).
\end{align}
In view of Definition \ref{7.21.1.0} (3),  we have
\begin{align*}
\left(\fB+\f12\tr k\right) \left(-\frac{\rho}{\bb^{-1}\tau} \tr k \right)
& = \omega \tr k-\frac{\rho}{\bb^{-1}\tau} \left(\fB \tr k+\f12 (\tr k)^2\right).
\end{align*}
By using the Raychaudhouri equation \cite[Section 3]{Wang16}
\begin{equation}\label{7.15.4}
\fB \tr k+\f12 (\tr k)^2 =-\bR_{\fB \fB}-|\hk|^2,
\end{equation}
we further obtain
\begin{align*}
\left(\fB+\f12\tr k\right) \left(-\frac{\rho}{\bb^{-1}\tau} \tr k \right)
= \tr k \omega+\frac{\rho}{\bb^{-1}\tau} (\bR_{\fB\fB}+|\hk|^2).
\end{align*}
Therefore, it follows from (\ref{7.15.5}) that
\begin{align*}
\underline{\mu} = \frac{\rho}{\bb^{-1}\tau} \left[\omega \tr k + \frac{\rho}{\bb^{-1} \tau} (\bR_{\fB\fB} + |\hk|^2)
+ 2 \left(\fB + \f12 \tr k\right) \left(\Tr \pmb{\pi} - \frac{\tir^2}{\bb^{-2} \tau^2} \pmb{\pi}_{\bN\bN}\right)\right].
\end{align*}
This together with  (\ref{2.14.4}) shows (\ref{7.15.2}).

Finally we prove (\ref{7.18.1}).  From the identity
\begin{align*}
\tensor{\widetilde \bR}{_{\tilde \b}^{\tilde\a} _{\tilde\ga \tilde\d}}
&=\p_{\tilde \d} {\tilde \Ga}^{\tilde\a}_{\tilde \b \tilde \ga} - \p_{\tilde \ga} {\tilde \Ga}^{\tilde \a}_{\tilde \b \tilde \d}
+{\tilde\Ga}^{\tilde \a}_{\tilde \d \tilde \eta}{\tilde \Ga}^{\tilde \eta}_{\tilde \b \tilde \ga}
-{\tilde \Ga}_{\tilde \ga \tilde \eta}^{\tilde \a} {\tilde \Ga}^{\tilde \eta}_{\tilde \b \tilde \d}
\end{align*}
and (\ref{chrisplit}) it follows that
\begin{equation*} 
\tensor{\widetilde \bR}{_\b^\a_{z\ga}}=0 \quad \mbox{ and } \quad
\tensor{\widetilde\bR}{_\b^\a_{\ga\delta}}=\tensor{\bR}{_\b^\a_{\ga\delta}}.
\end{equation*}
Therefore, by using  (\ref{fb1}), (\ref{fb2}), (\ref{barlb}) we can obtain (\ref{7.18.1}).
\end{proof}

\begin{proof}[Proof of Theorem \ref{7.13.2.16}]
For $A_I$ satisfying (\ref{tsa2}), by viewing it as an $\M$-tangent tensor in $\widetilde \M$, we may use (\ref{barlb}),
(\ref{area_exp}) and (\ref{chrisplit}) to verify that $A_I$ satisfies (\ref{tsa}) along $\H_*$. Thus, we may
represent $\phi_I$ by the formula (\ref{2.14.2}) in Proposition \ref{7.15.8}. Since $\phi_I$ is $\M$-tangent, we have
from (\ref{ael}), (\ref{barlb}) and (\ref{2.14.4}) that
\begin{align*}
\int_{\H_*} \Box_{\tilde \bg} \phi_I A^I \bb d\mu
& = \int_{t_p-t_0}^{t_p} \int_{S_t} \Box_{\bg} \phi_I A^I \bb n d\mu_{S_t} d t
= 2 \int_{t_p-t_0}^{t_p} \int_{\Sigma_t} \Box_{\bg} \phi_I A^I \frac{\tau}{\rho} n d \mu_{\Sigma_t} dt\\
& = 2 \int_{\I_*^-(p)} F_I A^I \frac{\tau}{\rho} n d \mu_{\Sigma_t} dt
\end{align*}
and
\begin{align*}
& \int_{S_{t_0}} \left(\widetilde \bd_{\tilde \Lb} \phi_I + \frac{1}{2} \tr \tilde \chib \phi_I\right) A^I d\mu_{S_{t_0}}\\
& = \int_{S_{t_0}} \left[\frac{\rho}{\bb^{-1}\tau} \bd_{\fBb} \phi_I - \f12 \frac{\rho}{\bb^{-1}\tau} \tr k \phi_I
+ \left(\Tr \pmb\pi - \frac{\tir^2}{\bb^{-2}\tau^2} \pmb\pi_{\bN\bN}\right)\phi_I\right] A^I d\mu_{S_{t_0}}\\
& = 2\int_{\Sigma_{t_0}} \left[\bd_{\fBb} \phi_I - \f12 \tr k \phi_I
+ \frac{\bb^{-1}\tau}{\rho} \left(\Tr \pmb\pi - \frac{\tir^2}{\bb^{-2}\tau^2} \pmb\pi_{\bN\bN}\right)\phi_I\right] A^I d\mu_{\Sigma_{t_0}}.
\end{align*}
Therefore
\begin{align}\label{17.1.14}
4 \pi (n \bg(\phi, J))(p) = -2 \int_{\I_*^-(p)} F_I A^I \frac{\tau}{\rho} n d\mu_{\Sigma_t} dt + 2 \I_1 + Er[\phi],
\end{align}
where $\I_1$ is given in Theorem \ref{7.13.2.16} and $Er[\phi]$ is given by (\ref{2.14.1}) in Proposition \ref{7.15.8}.

We need to consider $Er[\phi]$. By the divergence theorem, (\ref{ael}) and  (\ref{7.13.1.16}), we have
\begin{align*}
\int_{S_t} \widetilde \sD \phi_I A^I \bb d\mu_{S_t}
&= - \int_{S_t} \widetilde \sn \phi_I \widetilde \sn(A^I \bb) d\mu_{S_t} \\
& = -2 \int_{\Sigma_t} \left[\sn \phi_I \sn (A^I \bb) + \frac{\rho^2}{\bb^{-2} \tau^2} \bd_{\bN} (\bb A^I) \bd_{\bN} \phi_I \right]
\frac{\bb^{-1}\tau}{\rho} n d\mu_{\Sigma_t}.
\end{align*}
In view of (\ref{torsion}) we then obtain
$$
\int_{\H_*} \left(2\tilde \zb^C \ti D_C \phi_I + \widetilde \sD \phi_I\right) A^I \bb d\mu = 2 \I_2
$$
with $\I_2$ defined in Theorem \ref{7.13.2.16}. In view of (\ref{7.15.2}), (\ref{7.18.2}) and (\ref{ael}) we can see that
$$
 \f12\int_{\H_*} \left(-(\widetilde \bR*\phi)_I + (\underline{\mu}-\omega \tr \ti\chib) \phi_I \right) A^I \bb d\mu = 2 \I_3,
$$
where $\I_3$ is defined in Theorem \ref{7.13.2.16}. Consequently $Er[\phi] = 2 \I_2 + 2 \I_3$. Combining this with
(\ref{17.1.14}) we therefore complete the proof.
\end{proof}

\section{\bf Appendix. Proof of Proposition \ref{7.15.8}}\label{7.15.7}
\setcounter{equation}{0}


In this section we give an alternative proof of the Kirchhorff formula in Proposition \ref{7.15.8} by using a multiplier type approach.
As in \cite{Wang10}, we can decompose $\Box_{\tilde \bg} \phi_I$ as
\begin{align}\label{f1}
\Box_{\tilde\bg} \phi_I &= -\widetilde\bd_{\tilde L}( \widetilde\bd_{\tilde\Lb} \phi)_I
+2 \tilde\zb^C \widetilde{\bd}_C \phi_I - \f12 \tr\tilde\chib\widetilde\bd_{\tilde L} \phi_I
-\f12(\tr\tilde\chi-2 \omega) \widetilde\bd_{\tilde \Lb} \phi_I \nn\\
&\quad \,  + \delta^{AB} {\widetilde \sn}_A {\tilde\sn}_B \phi_I - \f12 (\widetilde\bR *\phi)_I,
\end{align}
where $(\tilde {\bf R}*\phi)_I$ is defined by (\ref{2017.1.1.1}).

Let $v=v_t$ and $\psi_I=v^{\f12} \phi_I$. Note that $\tilde L v = \tr\tilde\chi v$ and $\tilde \Lb v = \tr\tilde\chib v$.
We can derive that
\begin{align*}
\widetilde \bd_{\tilde L}( \widetilde\bd_{\tilde \Lb} \psi)_I
& = \widetilde\bd_{\tilde L}\left(v^{\f12} \widetilde\bd_{\tilde \Lb} \phi +\f12 v^\f12 \tr \tilde\chib \phi\right)_I \\
& = v^{\f12}\widetilde\bd_{\tilde L}(\widetilde\bd_{\tilde \Lb} \phi)_I+\f12 v^{\f12} \left(\tr \tilde\chi \widetilde\bd_{\tilde\Lb} \phi_I
+ \tr \tilde\chib \widetilde\bd_{\tilde L}\phi_I\right) + \f12 \underline{\mu} \psi_I.
\end{align*}
By contracting (\ref{f1}) with $\bb v A$ and using the above identity it follows that
\begin{align}
\Box_{\tilde\bg} \phi_I A^I \bb v & = -\left(\widetilde\bd_{\tilde L} (\widetilde\bd_{\tilde \Lb} \psi)_I-\f12 \underline{\mu} \psi_I\right)
A^I v^\f12\bb+\left(2\tilde \zb^C {\widetilde \bd}_C \phi_I + \omega \widetilde\bd_{\tilde\Lb} \phi_I\right) A^I v\bb \nn\\
&\quad \, +\widetilde{\sD} \phi_I A^I v\bb - \f12( \widetilde \bR*\phi)_I A^I v \bb. \label{2.16.3}
\end{align}
Note that the first term on the right hand side can be recast as
\begin{align*}
-\widetilde \bd_{\tilde L}(\widetilde\bd_{\tilde\Lb}  \psi)_I A^I v^{\f12}\bb
& = -\tilde L\left(\bb \widetilde \bd_{\tilde\Lb} \psi_I A^I v^\f12\right)
+ \widetilde \bd_{\tilde L} ( v^\f12\bb A)^I \widetilde \bd_{\tilde \Lb} \psi_I\\
& = -\tilde L\left(\bb \widetilde \bd_{\tilde \Lb} \psi_I A^I v^\f12\right)
+\bb \left(A^I v^{\f12}\tilde L\log \bb +\widetilde \bd_{\tilde L}(v^\f12 A)^I \right)\widetilde \bd_{\tilde \Lb} \psi_I\\
&=-\tilde L\left(\bb \widetilde\bd_{\tilde\Lb} \psi_I A^I v^\f12\right)\\
& \quad \, +\bb\left( \left(\frac{n^{-1}-\bb}{\tau}-\omega\right) A^I+ \left(\widetilde \bd_{\tilde L} A^I
+\f12 \tr\tilde \chi A^I \right) \right) v^\f12 \widetilde \bd_{\tilde \Lb} \psi_I,
\end{align*}
where for the last equality we employed (\ref{llogb}). By using (\ref{tsa}) and $\tilde \Lb v=\tr\tilde \chib v$,
we have
\begin{align}\label{2.16.2}
-\widetilde \bd_{\tilde L}(\widetilde \bd_{\tilde \Lb} \psi)_I A^I v^{\f12}\bb
= - \tilde L\left(\bb \widetilde \bd_{\tilde \Lb} \psi_I A^I v^\f12\right)
-\bb\omega\left(\widetilde \bd_{\tilde \Lb} \phi_I+\f12 \tr\tilde \chib\phi_I\right) A^I v.
\end{align}
Substituting (\ref{2.16.2}) into (\ref{2.16.3}) and integrating along $\H_*$, we have, in view of $\frac{d}{d\tau}=n\tilde L$, that
\begin{align}
\int_{\H_*} \Box_{\tilde\bg} \phi_I A^I \bb v d\mu
=-\int_{S_{t_0}} \bb\widetilde\bd_{\tilde\Lb} \psi_I A^I v^{-\f12} d\mu_{S_{t_0}}
+\lim_{t\rightarrow t_p}\int_{S_t} \bb{\widetilde \bd}_{\tilde \Lb} \psi_I A^I v^{-\f12}  d\mu_{S_{t}}
+\Er[\phi], \nn
\end{align}
where  $\Er[\phi]$ is defined in Proposition \ref{7.15.8} and $d \mu=n d\mu_{S_{t}} dt$.
We claim that
\begin{equation}\label{8.3.1}
\lim_{t\rightarrow t_p} \int_{S_t} {\widetilde \bd}_{\tilde \Lb} \psi_I A^I v^{-\f12}  d\mu_{S_{t}}
= -4\pi( n\l \phi, J\r)(p).
\end{equation}
Using this claim we can conclude that
\begin{align*}
\int_{\H_*} \Box_{\tilde\bg} \phi_I A^I d\mu = -\int_{S_{t_0}} \bb\widetilde\bd_{\tilde\Lb} \psi_I A^I v^{-\f12} d\mu_{S_{t_0}}
-4\pi (n \l \phi, J\r)(p)+ \Er[\phi]
\end{align*}
which is the desired formula.

To obtain (\ref{8.3.1}), we note that
\begin{equation}\label{8.5.1}
\widetilde\bd_{\tilde \Lb} \psi_I A^I v^{-\f12}
= \left(\f12 \tr\tilde\chib\phi_I+\widetilde\bd_{\tilde\Lb} \phi_I\right)A^I.
\end{equation}
Recall that $\tau = t_p -t$. By using (\ref{2.14.4}) and the boundedness of $\Tr\pi$ and $\pi_{\bN\bN}$
near $p$ we have $\tau \tr \tilde \chib + \bb \rho \tr k \rightarrow 0$ as $t\rightarrow t_p$. Recall also  that
$$
\rho \tr k \rightarrow 2, \quad \bb \rightarrow \frac{1}{n(p)} \quad \mbox{and} \quad
\frac{|S_t|}{4\pi n^2\tau^2} \rightarrow 1 \quad \mbox{ as }  t\rightarrow t_p
$$
which have been proved in \cite{Eric,Wang16, WangCMC}. We therefore obtain $\tau \tr \tilde \chib \rightarrow
-2/n(p)$ and hence, by using (\ref{8.5.1}) and $\lim_{t\rightarrow t_p}\tau A_I=J_I$ in (\ref{tsa}), we can obtain (\ref{8.3.1}).

\end{document}